\newcommand{\rem}[1]{}
\def\thefigure{\thesection.\@arabic\c@figure}
\def\fps@figure{h, t}
\def\thetable{\thesection.\@arabic\c@table}
\def\fps@table{h, t}
\newcommand \al{\alpha}
\newcommand\be{\beta}
\newcommand\ga{\gamma}
\newcommand\de{\delta}
\newcommand\et{\eta}
\renewcommand\th{\theta}
\newcommand\la{\lambda}
\newcommand\si{\sigma}
\newcommand\ph{\varphi}
\newcommand\ps{\psi}
\newcommand\om{\omega}
\newcommand\Ga{\Gamma}
\newcommand\La{\Lambda}
\newcommand\Ph{\Phi}
\newcommand\Om{\Omega}
\newcommand\wrt{w.r.t.\ }
\newcommand\resp{resp.\ }
\newcommand\ie{i.e.\ }
\newcommand\oo{{\infty}}
\renewcommand\o{\circ}
\newcommand\vl{\on{vl}}
\newcommand\x{\times}
\newcommand\on{\operatorname}
\newcommand\ad{\on{ad}}
\newcommand\ev{\on{ev}}
\newcommand\pr{\on{pr}}
\newcommand\Fl{\on{Fl}}
\newcommand\F{\mathcal{F}}
\newcommand\Diff{\on{Diff}}
\newcommand\Hom{\on{Hom}}
\renewcommand\Im{\on{Im}}
\newcommand\g{\mathfrak g}
\newcommand\X{\mathfrak X}
\newcommand\CA{\mathcal{CA}}
\newcommand\SA{\mathcal{SA}}
\newcommand\ZZ{\mathbb Z}
\newcommand\RR{\mathbb R}
\newenvironment{proof}[1][Proof]{\noindent\textbf{#1.} }{\ \rule{0.5em}{0.5em}}
\newcommand{\fint}{-\!\!\!\!\!\!\int}
\def\XXint#1#2#3{{\setbox0=\hbox{$#1{#2#3}{\int}$ }
\vcenter{\hbox{$#2#3$ }}\kern-.5\wd0}}
\begin{document}

\newtheorem{theorem}{Theorem}[section]
\newtheorem{definition}[theorem]{Definition}
\newtheorem{lemma}[theorem]{Lemma}
\newtheorem{proposition}[theorem]{Proposition}
\newtheorem{corollary}[theorem]{Corollary}
\newtheorem{remark}[theorem]{Remark}
\newtheorem{example}[theorem]{Example}

\def\below#1#2{\mathrel{\mathop{#1}\limits_{#2}}}

\title{Current algebra functors and extensions}
\author{Anton Alekseev$^{1}$, Pavol Severa$^{2}$, and Cornelia Vizman$^{3}$ }

\addtocounter{footnote}{1}
\footnotetext{Section of Mathematics, University of Geneva, 2-4 Rue de Li\`evre, 
c.p. 64, 1211-Gen\`eve 4, Switzerland.
\texttt{Anton.Alekseev@unige.ch}
\addtocounter{footnote}{1} }
\footnotetext{Section of Mathematics, University of Geneva, 2-4 Rue de Li\`evre, 
c.p. 64, 1211-Gen\`eve 4, Switzerland. On leave from FMFI UK Bratislava, Slovakia.
\texttt{Pavol.Severa@unige.ch}
\addtocounter{footnote}{1} }
\footnotetext{Department of Mathematics,
West University of Timi\c soara, 4 Bd. V. P\^arvan, 300223-Timi\c soara, Romania.
\texttt{vizman@math.uvt.ro}
\addtocounter{footnote}{1} }

\date{ }
\maketitle

\makeatother
\maketitle




\begin{abstract}
We show how the fundamental cocycles on current Lie algebras
and the Lie algebra of symmetries for the sigma model are obtained via the current algebra functors
introduced in \cite{AlSe10}.
We present current group extensions integrating some of these current Lie algebra extensions.
\end{abstract}



\section{Introduction}

In \cite{AlSe10}, the first and second author introduced two {\em current algebra functors} $\CA$ and $\SA$
which assign  a Lie algebra (a current algebra) to a pair of a smooth manifold $S$ and a differential 
graded Lie algebra (dgla) $A$. Recall that for a Lie algebra $\g$ the cone $C\g$ is a dgla with a copy of $\g$ 
in degree zero, a copy of $\g$ in degree $(-1)$ and the differential $d: \g_{-1} \to \g_0$ mapping one copy to the 
other.  The name ``current algebra'' comes from the fact that for $A=C\g$ the current algebra functors 
yield the Lie algebra of maps from $S$ to $\g$,
$$
\CA(S, C\g) = \SA(S, C\g) = C^\infty(S, \g).
$$
Among other things, the current algebra functors are useful for constructing central and abelian
extensions of the Lie algebra $C^\infty(S, \g)$. In particular, in \cite{AlSe10} we showed how 
to derive the standard central extension of the loop algebra $C^\infty(S^1,\g)$, where $\g$ is a quadratic
Lie algebra. Surprisingly, it  comes from the central extension of
the dgla $C\g$ by a line in degree $(-2)$. This central extension is defined by formula
\begin{equation}  \label{xyc}
[x,y] = (x,y) c,
\end{equation}
where $x,y \in \g_{-1}$, $(\cdot, \cdot)$ is the invariant scalar product on $\g$, and $c$ is the 
generator of the central line. Note that equation \eqref{xyc} is the defining relation of the Clifford
algebra on $\g$ defined by the scalar product $(\cdot, \cdot)$. Another application of the current
algebra functors is an elegant construction of the Faddeev-Mickelsson-Shatashvili abelian
extension of the Lie algebra $C^\infty(S, \g)$ for $S$ a manifold of higher (odd) dimension.

The purpose of this note is to give two new applications of the current algebra functors.
The first application concerns the 2-cocycles on the Lie algebra $C^\infty(S, \g)$.
They were classified by Neeb in \cite{Ne07}. In Section 2, we show that all cocycles
of this classification can be obtained by applying the current algebra functors $\CA$ and $\SA$ 
to various extensions of the cone alga $C\g$. The key ingredient in the proof is an
observation that under some assumptions the current algebra functors map
exact sequences to exact sequences (even though this is not the case in general).

In Section 3, we present a new construction of current algebras associated to 2-dimensional $\sigma$-models
introduced in \cite{AlSt05} (for some applications in Physics, see \cite{nekrasov} and \cite{zabzine}). 
For these current algebras, the dgla $A$ is built from the Courant
bracket on the ``target'' manifold $M$. The main tool here is the study of  current algebras
associated to semi-direct products of dglas. Another important tool which allows to work with Poisson brackets
on $\sigma$-models is the hat calculus introduced in \cite{Vi09} and reviewed in Section 4.

\vskip 0.2cm

{\bf Acknowledgements.} We are grateful to K.-H. Neeb for his interest in our work.
This work was supported in part by the grants number 140985 and 126817 of the 
Swiss National Science Foundation, and by the grant PN-II-ID-PCE-2011-3-0921
of the Romanian National Authority for Scientific Research.


\section{Fundamental cocycles on current algebras}\label{fcca}

\subsection{Current algebra functors}

Two current algebra functors which associate Lie algebras to pairs $(S,A)$ consisting of 
a smooth manifold $S$ and a differential graded Lie algebra (dgla) $A$, were introduced in \cite{AlSe10}.
Their name comes from the fact that the current Lie algebra $C^\oo(S,\g)$ is obtained
when $A$ is the cone of a Lie algebra $\g$.
The cone $C\g$ of a Lie algebra $\g$ is the dgla spanned by elements $L(x)$ 
of degree $0$ and elements $I(x)$ of degree $-1$, with $x\in\g$. The Lie bracket on $C\g$ is
defined by $[L(x),L(y)]=L[x,y]$, $[L(x),I(y)]=I[x,y]$, $[I(x),I(y)]=0$,
while the differential on $C\g$ is given by $dI(x)=L(x)$.

The first functor is the current algebra functor $\CA$:
\begin{equation}\label{ca}
\CA(S,A)=(\Om(S)\otimes A)^{-1}/(\Om(S)\otimes A)_{exact}^{-1}
\end{equation}
with Lie bracket the derived bracket \cite{KS96}:
\begin{equation}\label{debra}
[\ph\otimes a,\ps\otimes b]_d=[\ph\otimes a,d(\ps\otimes b)]=\ph d\ps\otimes[a,b]+(-1)^{|\ps|}\ph\ps\otimes[a,db].
\end{equation}
The second current algebra functor $\SA$ is simply
\begin{equation}\label{sa}
\SA(S,A)=(\Om(S)\otimes A)^0_{closed}
\end{equation}
with the obvious Lie bracket $[\ph\otimes a,\ps\otimes b]=\ph\ps\otimes[a,b]$.
They fit into an exact sequence of Lie algebras:
\begin{equation}\label{patr}
0\to H^{-1}(\Om(S)\otimes A)\to\CA(S,A)\stackrel{d}{\to}\SA(S,A)\to H^0(\Om(S)\otimes A)\to 0.
\end{equation}

The two functors $\CA$ and $\SA$ coincide for acyclic dgla's. When $A$ is the cone $C\g$ of a Lie algebra,
both functors provide the current Lie algebra:
\begin{align*}\label{cas}
\CA(S,C\g)&=\mathrm{span}\{\ph\otimes I(x)\ :\ \ph\in C^\oo(S),x\in\g\}\cong C^\oo(S,\g)\\
\SA(S,C\g)&=\mathrm{span}\{\ph\otimes L(x)+d\ph\otimes I(x)\ :\ \ph\in C^\oo(S),x\in\g\}\cong C^\oo(S,\g).
\end{align*}

\begin{remark}\label{exact} {\rm \cite{AlSe10}
The current algebra functors $\CA$ and $\SA$ are contravariant \wrt to $S$ and covariant \wrt $A$.
Furthermore, for any short exact sequence of dgla's $0\to A\to B\to C\to 0$,
with $C$ acyclic as a complex, we get short exact sequences of Lie algebras:
\begin{gather*}
0\to\CA(S,A)\to\CA(S,B)\to\CA(S,C)\to 0\\
0\to\SA(S,A)\to\SA(S,B)\to\SA(S,C)\to 0.
\end{gather*}
}
\end{remark}


\subsection{Central dgla extensions of the cone}

In this section we show how the fundamental cocycles on current algebras
can be obtained by applying the current algebra functors to central dgla extensions of the cone of a Lie algebra.

\begin{theorem}\label{fundam}{\rm  \cite{Ne07}}
Each continuous 2-cocycle on the current Lie algebra $C^\oo(S,\g)$ is a sum of cocycles that factor through 
a cocycle of the following type:
\begin{align*}
1.\ &\si_{p}(u,v)=p(u,dv)+dC^\oo(S)\in\Om^1(S)/dC^\oo(S),\quad p\in (S^2\g^*)^\g;\\
2.\ &\si_N(u,v)=p(u,dv)-p(du,v)+d(\om(u,v))\in\Om^1(S),\\&  p\in(S^2\g^*)^\g \text{ and } 
\om\in\La^2\g^* \text{ such that }p([\ ,\ ],\ )=-d_\g\om;\\
3.\ &\si_\ga(u,v)=\ga(u,v)\in C^\oo(S),\quad\ga\in\La^2\g^*,\ d_\g\ga=0;\\
4.\ &\text{pull-back of a cocycle on an abelian quotient of }C^\oo(S,\g).
\end{align*}
\end{theorem}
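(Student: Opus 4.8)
The theorem splits into two parts: that each listed expression is a continuous $2$-cocycle taking values where claimed, and that every continuous cocycle is a sum of cocycles of these types. The first, constructive part is the one naturally handled by the current algebra functors: applying $\CA$ and $\SA$ to central dgla extensions of the cone $C\g$ produces cocycles on $C^\oo(S,\g)$, and by Remark~\ref{exact} the associated sequences remain exact, so the invariant form $p$ and the $\g$-cochains $\om,\ga$ assemble into genuine cocycles (a central line placed in degree $-2$, $-1$ or $0$ of $C\g$ yields the three local types). My plan therefore concentrates on the substantive part, the exhaustiveness, which I would prove in three stages.

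First I would establish locality. Writing elements as $f\otimes x$ with $f\in C^\oo(S)$, $x\in\g$, I would use that the pointwise bracket vanishes on elements with disjoint supports. Inserting into the cocycle identity
\[
\si([u,v],w)+\si([v,w],u)+\si([w,u],v)=0
\]
a test element $w$ supported between the supports of $u$ and $v$, and invoking continuity together with a partition of unity, one shows that $\si$ is supported on the diagonal of $S\times S$; equivalently, modulo a coboundary, $\si(u,v)$ is a bidifferential operator in $u$ and $v$. This functional-analytic localization is where I expect the main difficulty, since it is what turns an a priori global object into one governed by finite-dimensional linear algebra.

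Next I would bound the differential order. The constant maps $\g\subset C^\oo(S,\g)$ act by the adjoint representation, and since this inner action is trivial on cohomology one may take $\si$ invariant, so that the symbol of the bidifferential operator lies in the $\g$-invariants. Comparing the top-order terms of the cocycle identity on triples $f\otimes x,\ h\otimes y,\ k\otimes z$ then forces this symbol to be symmetric and kills every contribution of order above one. The surviving order-$(0,0)$ part is pointwise, governed by $\ga\in\La^2\g^*$ with the cocycle condition collapsing to $d_\g\ga=0$ (type $3$); the order-one part has coefficient an invariant symmetric form $p\in(S^2\g^*)^\g$ contracted against the de~Rham differential, producing the $\Om^1(S)$-valued terms.

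Finally I would separate types $1$ and $2$ and dispose of the remainder. As $p$ is symmetric, $p(u,dv)+p(du,v)=d\,p(u,v)$ is exact, so in $\Om^1(S)/dC^\oo(S)$ only the antisymmetric part survives, which is type $1$; in full $\Om^1(S)$ the antisymmetric combination $p(u,dv)-p(du,v)$ obeys the cocycle identity only after correction by a term $d(\om(u,v))$ with $\om\in\La^2\g^*$ satisfying $p([\ ,\ ],\ )=-d_\g\om$, which is type $2$. Whatever continuous cocycle remains after subtracting these local pieces annihilates the derived ideal of $C^\oo(S,\g)$, hence factors through an abelian quotient, giving type $4$. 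Besides the initial localization, the recurring delicate point is the bookkeeping that prevents the de~Rham cohomology of $S$ and the Chevalley--Eilenberg cohomology of $\g$ from being conflated.
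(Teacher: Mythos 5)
You should first be aware that the paper does not prove this theorem at all: it is quoted from \cite{Ne07}, and the authors' own contribution in Section 2.2 is only the constructive half, namely the realization of $\si_p$, $\si_\ga$ and $\si_N$ as characteristic cocycles of $\CA(S,C_p\g)$, $\CA(S,C_\ga\g)$ and $\SA(S,C_\al\g)$ for suitable one- and two-dimensional central dgla extensions of the cone. Your first paragraph is therefore the only part that corresponds to anything in the paper, and it does match it (central lines in degrees $-2$ and $-1$ give types 1 and 3, while the Neeb cocycle of type 2 requires the two-dimensional extension $C_\al\g$ rather than a single central line); everything after it is an independent attempt at Neeb's classification, i.e.\ the exhaustiveness, which the paper simply defers to \cite{Ne07}.

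As a sketch of that direction your outline follows the expected strategy (localize, invoke invariance, bound the differential order, peel off the local types, push the remainder onto an abelian quotient), but each load-bearing step is asserted rather than proved. First, the localization: ``a test element $w$ supported between the supports of $u$ and $v$'' does not parse on a general manifold $S$; the standard argument takes $w$ with support disjoint from $\operatorname{supp}(u)\cup\operatorname{supp}(v)$, deduces from the cocycle identity that $\si(\cdot,w)$ vanishes on brackets supported away from $w$, and even then one needs a Peetre-type theorem to convert diagonal support plus continuity into a bidifferential operator --- this is the technical core and cannot be waved through. Second, triviality of inner derivations on cohomology lets you correct the cocycle by a coboundary for each fixed $x\in\g$, but producing a single $\g$-invariant representative needs an averaging or reductivity argument; for non-reductive $\g$ this is a genuine obstruction and is part of why type 4 must appear. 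Third, the claim that whatever remains after subtracting types 1--3 annihilates the derived ideal is precisely the content of the classification, not a consequence of the subtraction; as written this step is circular. So the proposal is a plausible roadmap to Neeb's theorem but not a proof of it, and in the context of this paper the correct ``proof'' of the statement is the citation together with the functorial realization of types 1--3 that your opening paragraph already summarizes.
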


\paragraph{One-dimensional central extensions of the cone.}
The isomorphism classes of central one-dimensional graded Lie algebra extensions of $C\g$  by $\RR[k]$ are classified by
\begin{align*}
k=0:\ (L(x)+I(y),L(x')+I(y'))&\mapsto\rho(x,x'),\ \quad\quad\quad\quad [\rho]\in H^2(\g)\nonumber\\ 
k=1:\ (L(x)+I(y),L(x')+I(y'))&\mapsto \la(x)y'-\la(x')y,\quad [\la]\in H^1_{\ad^*}(\g,\g^*)\\
k=2:\ (L(x)+I(y),L(x')+I(y'))&\mapsto p(y,y'),\quad\quad\quad\quad\quad p\in(S^2\g^*)^\g\nonumber.
\end{align*}
We get a nontrivial dgla extension of $C\g$ by $\RR[k]$ if either $k=2$, or if $k=1$ and the cocycle $\la:\g\to\g^*$ is such that the map 
$(x,y)\mapsto\la(x)y=\ga(x,y)$ is skew-symmetric on $\g$. In both cases the extended dgla is $C\g\oplus\RR[k]$ as a complex.

Applying the current algebra functor $\CA$ to the corresponding 
dgla cone extensions denoted by $C_p\g$ and $C_\ga\g$, and using remark \ref{exact},
the central extensions of the current algebra $C^\oo(S,\g)$ by $C^\oo(S)$, \resp by $\Om^1(S)/dC^\oo(S)$ are obtained: 
\begin{align*}
\CA(S,C_\om\g)&=C^\oo(S,\g)\x_{\si_\ga} C^\oo(S),\quad\quad\quad\quad\ \si_\ga(u,v)=\ga(u,v)\\
\CA(S,C_p\g)&=C^\oo(S,\g)\x_{\si_{p}}\Om^1(S)/dC^\oo(S),\quad \si_{p}(u,v)=p(u,dv)+dC^\oo(S).
\end{align*}
These are the cocycles  1. and 3. from Theorem \ref{fundam}.

We check it for the cocycle $\si_\ga$, since the cocycle $\si_p$ has been treated in \cite{AlSe10}.
The derived bracket \eqref{debra} in $\CA(S,C_\ga\g)$ is
$$
[\ph\otimes I(x),\ps\otimes I(y)]_d=[\ph\otimes I(x),\ps\otimes L(y)+d\ps\otimes I(y)]=\ph\ps\otimes I[x,y]
+\ph\ps\otimes\la(x)y.
$$
Then the characteristic cocycle of the central extension is
\begin{align*}
\si_\ga(\ph\otimes x,\ps\otimes y)
\stackrel{\eqref{cas}}{=}\si_\ga\left(\ph\otimes I(x),\ps\otimes I(y)\right)
=\ph\ps\otimes\la(x)y=\ph\ps\otimes\ga(x,y),
\end{align*}
which can be rewritten as $\si_\ga(u,v)=\ga(u,v)$ for all $u,v\in C^\oo(S,\g)$.

\medskip

To get the Neeb cocycle $\si_N$, the second cocycle from Theorem \ref{fundam}, one needs a two dimensional dgla central extension of $C\g$.
First we notice that a $\g^*$-valued 1-cocycle $\al$ on $\g$ (for the coadjoint action)  is the same
as an exact invariant inner product $p$ on $\g$.
Indeed, any such cocycle $\al$ decomposes into its symmetric part
$p\in S^2\g^*$ and skew-symmetric part $\om\in\La^2\g^*$ by
$\al(x)y=p(x,y)+\om(x,y)$, $x,y\in\g$.
One gets that 
$$
(d_\g\al)(x,y)z=p(x,[y,z])+(d_\g\om)(x,y,z),
$$
so that the vanishing of the Chevalley-Eilenberg differential of $\al$ is equivalent to
the exactness of $p$, namely $p([\ ,\ ],\ )=-d_\g\om$. 
From this also the $\g$-invariance of $p$ follows.

The 2-cocycle on $C\g$ 
\begin{equation}\label{alphacocycle}
(L(x)+I(y),L(x')+I(y'))\mapsto (\al(x)y'-\al(x')y,\al(y)y'+\al(y')y)
\end{equation}
with values in $\RR[1]\oplus\RR[2]$
defines a 2-dimensional central graded Lie algebra extension of $C\g$, denoted by $C_\al\g$.
The differential that takes the central element $c_2$ in degree $-2$ to the central element $c_1$ in degree $-1$
(and of course $I(x)$ to $L(x)$) makes it  a dgla.

\begin{proposition}
The Lie algebra $\SA(S,C_\al\g)$ is isomorphic to the central Lie algebra extension of the current algebra $C^\oo(S,\g)$ by $\Om^1(S)$ defined with the Neeb cocycle $\si_N$.
\end{proposition}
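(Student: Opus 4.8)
The plan is to describe $\SA(S,C_\al\g)$ explicitly as the space of closed degree-zero elements, recognize it as a central extension of $C^\oo(S,\g)$ by $\Om^1(S)$, and then compute the cocycle associated to a natural section and check that it equals $\si_N$.

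First I would apply Remark~\ref{exact} to the short exact sequence of dgla's $0\to\RR[1]\oplus\RR[2]\to C_\al\g\to C\g\to 0$, whose quotient $C\g$ is acyclic. This produces a short exact sequence of Lie algebras $0\to\SA(S,\RR[1]\oplus\RR[2])\to\SA(S,C_\al\g)\to\SA(S,C\g)\to 0$, and the last term is $C^\oo(S,\g)$. It remains to identify the kernel. A degree-zero element of $\Om(S)\otimes(\RR c_1\oplus\RR c_2)$ has the form $\mu\otimes c_1+\nu\otimes c_2$ with $\mu\in\Om^1(S)$, $\nu\in\Om^2(S)$; since $dc_1=0$ and $dc_2=c_1$, closedness forces $\nu=-d\mu$. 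Hence $\SA(S,\RR[1]\oplus\RR[2])\cong\Om^1(S)$ via $\mu\mapsto\mu\otimes c_1-d\mu\otimes c_2$, and since $c_1,c_2$ are central in $C_\al\g$ this kernel is central. Thus $\SA(S,C_\al\g)$ is a central extension of $C^\oo(S,\g)$ by $\Om^1(S)$.

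Next I would fix the section coming from the isomorphism $\SA(S,C\g)\cong C^\oo(S,\g)$: to $u=\sum_i\ph_i x_i$ assign $s(u)=\sum_i(\ph_i\otimes L(x_i)+d\ph_i\otimes I(x_i))$, which is closed in $\Om(S)\otimes C_\al\g$ and has vanishing $c_1,c_2$ components. The cocycle is then the central element $[s(u),s(v)]-s([u,v])$, and by the previous paragraph its class in $\Om^1(S)$ is read off from its $c_1$-component. Expanding the bracket on $\Om(S)\otimes C_\al\g$ (with the Koszul sign $(-1)^{|a||\ps|}$ in $[\ph\otimes a,\ps\otimes b]$), the $L$- and $I$-components recombine, via $d(\ph\ps)=d\ph\,\ps+\ph\,d\ps$, into exactly $s([u,v])$; the genuinely new contributions are the $c_1$-component $\al(u,dv)-\al(v,du)$ and a $c_2$-component $-2p(du,dv)$. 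I would then simplify the $c_1$-term using $\al(x)y=p(x,y)+\om(x,y)$: symmetry of $p$ gives $p(v,du)=p(du,v)$, while skewness of $\om$ with the Leibniz rule gives $\om(u,dv)-\om(v,du)=\om(u,dv)+\om(du,v)=d(\om(u,v))$, so that $\al(u,dv)-\al(v,du)=p(u,dv)-p(du,v)+d(\om(u,v))=\si_N(u,v)$. A short check shows $-2p(du,dv)=-d\si_N(u,v)$, so $[s(u),s(v)]-s([u,v])=\si_N(u,v)\otimes c_1-d\si_N(u,v)\otimes c_2$, which under the identification above is precisely $\si_N(u,v)\in\Om^1(S)$, yielding $\SA(S,C_\al\g)\cong C^\oo(S,\g)\x_{\si_N}\Om^1(S)$.

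I expect the main obstacle to be the consistent sign bookkeeping: the $c_1$- and $c_2$-components must assemble into a single closed, $\Om^1(S)$-valued cocycle, and this forces the use of the Koszul sign in the bracket (the naive sign would make the result fail to be closed, since the $c_2$-part would come out as $+2p(du,dv)$ rather than $-d\si_N$). A secondary point, used throughout, is that $\al$ is a coadjoint $1$-cocycle, i.e.\ $p([\,\cdot\,,\cdot\,],\cdot)=-d_\g\om$; this is exactly what guarantees both that $C_\al\g$ is a genuine dgla and that $\si_N$ is a genuine $2$-cocycle on $C^\oo(S,\g)$.
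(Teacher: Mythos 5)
Your proof is correct and follows essentially the same route as the paper: both describe $\SA(S,C_\al\g)$ explicitly, compute the bracket of the canonical lifts $f\otimes L(x)+df\otimes I(x)$, and read off the $c_1$-component as $\si_N$ with the $c_2$-component forced to be $\pm d\si_N$ by closedness. The only differences are cosmetic --- you make the kernel identification $\SA(S,\RR[1]\oplus\RR[2])\cong\Om^1(S)$ and the Koszul signs explicit, which is why your $c_2$-term carries the opposite sign to the paper's convention.
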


\begin{proof}
The Lie bracket on
\begin{multline*}
\SA(S,C_\al\g)=\{df\otimes I(x)+f\otimes L(x)+\be\otimes c_1+d\be\otimes c_2:\\
 f\in C^\oo(S),\ \be\in\Om^1(S),\ x\in\g\}
\end{multline*}
is seen to be
\begin{multline*}
[df\otimes I(x)+f\otimes L(x),df'\otimes I(x')+f'\otimes L(x')]\\
\stackrel{\eqref{alphacocycle}}{=}d(ff')\otimes I({[x,x']})+ff'\otimes L([x,x'])
+df\wedge df'\otimes(\al(x)x'\\
+\al(x')x)c_2+fdf'\otimes(\al(x)x')c_1-f'df\otimes(\al(x')x)c_1.
\end{multline*}
Using also the decomposition of $\al$ into its symmetric and skew-symmetric parts $p$ and $\om$,
the characteristic cocycle is 
\begin{align*}
\si_\al(df\otimes I(x)&+f\otimes L(x),df'\otimes I(x')+f'\otimes L(x'))=
df\wedge df'\otimes 2p(x,x')c_2\\
&+\left(fdf'\otimes p(x,x')-f'df\otimes p(x,x')+d(ff')\otimes\om(x,x')\right)c_1.
\end{align*}
Written for $u,v\in C^\oo(S,\g)$, the $c_1$ component of $\si$ is Neeb cocycle 
$\si_N(u,v)=p(u,dv)-p(du,v)+d(\om(u,v))\in\Om^1(S)$, while the $c_2$ component is 
its differential $d\si_N(u,v)=2p(du,dv)\in\Om^2(S)$.
\end{proof}

\begin{proposition}
If the 1-cocycle $\al:\g\to\g^*$ involved in Neeb cocycle can be integrated to a group 1-cocycle $a:G\to\g^*$,
then there is a group 2-cocycle $c$ on the current group $C^\oo(S,G)$, integrating the Neeb cocycle:
$$
c(g_1,g_2)=a(g_1)(dg_2g_2^{-1})\in\Om^1(S).
$$
\end{proposition}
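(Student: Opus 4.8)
The plan is to establish two things: first, that the formula $c(g_1,g_2)=a(g_1)(dg_2\,g_2^{-1})$ genuinely defines a group $2$-cocycle on $C^\oo(S,G)$ with values in the trivial module $\Om^1(S)$ — hence a central extension, matching the central Lie algebra extension by $\Om^1(S)$ from the preceding Proposition — and second, that differentiating $c$ at the identity returns the Neeb cocycle $\si_N$. Two elementary identities carry all the weight. Writing $\mu(g):=dg\,g^{-1}\in\Om^1(S,\g)$ for the right logarithmic derivative, one has $\mu(g_2g_3)=\mu(g_2)+\Ad_{g_2}\mu(g_3)$. On the other hand, since $a$ integrates the coadjoint $1$-cocycle $\al$, it satisfies the group $1$-cocycle identity, which in the convention paired to the right logarithmic derivative reads $a(g_1g_2)=\Ad^*_{g_2^{-1}}a(g_1)+a(g_2)$; I will use it together with the invariance of the pairing, $\langle\Ad^*_{g_2^{-1}}\xi,X\rangle=\langle\xi,\Ad_{g_2}X\rangle$.

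For the cocycle identity I would substitute these two relations into the two ``middle'' terms. From the $1$-cocycle identity and the pairing invariance, $c(g_1g_2,g_3)=\langle a(g_1),\Ad_{g_2}\mu(g_3)\rangle+c(g_2,g_3)$, while from the $\mu$-identity, $c(g_1,g_2g_3)=c(g_1,g_2)+\langle a(g_1),\Ad_{g_2}\mu(g_3)\rangle$. Both expressions contain the same term $\langle a(g_1),\Ad_{g_2}\mu(g_3)\rangle$, so $c(g_1,g_2)+c(g_1g_2,g_3)=c(g_1,g_2g_3)+c(g_2,g_3)$, which is exactly the $2$-cocycle identity for the trivial module. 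Normalization $c(e,g)=c(g,e)=0$ is immediate from $a(e)=0$ and $\mu(e)=0$, so $c$ is a genuine central group $2$-cocycle on $C^\oo(S,G)$.

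It then remains to differentiate $c$ and recover $\si_N$. Along one-parameter subgroups I would use $a(\exp tu)=t\,\al(u)+O(t^2)$ (since $T_e a=\al$) and $\mu(\exp sv)=s\,dv+O(s^2)$, so that the mixed second derivative of $c(\exp tu,\exp sv)$ at the origin equals $\langle\al(u),dv\rangle$; the induced Lie algebra cocycle is the antisymmetrization $\si(u,v)=\langle\al(u),dv\rangle-\langle\al(v),du\rangle$. Splitting $\al=p+\om$ into symmetric and skew parts, the symmetric piece gives $p(u,dv)-p(du,v)$ (using $p(v,du)=p(du,v)$), while the skew piece gives $\om(u,dv)-\om(v,du)=\om(u,dv)+\om(du,v)=d(\om(u,v))$ by the Leibniz rule. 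Adding these yields precisely $\si_N(u,v)=p(u,dv)-p(du,v)+d(\om(u,v))$.

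Since both computations are short and formal, the real point to watch — and the step where I would be most careful — is matching conventions: one must take the group $1$-cocycle identity in the form adapted to the right logarithmic derivative $dg\,g^{-1}$, rather than the opposite form (adapted to $g^{-1}dg$), because it is exactly this pairing that forces the two middle terms to share a common expression and cancel; the wrong convention would silently break the cocycle property. The second, more structural, subtlety lies in the differentiation, where the skew part of $\al$ must be seen to reassemble, via Leibniz, into the exact term $d(\om(u,v))$ — this is what certifies that $c$ integrates the \emph{full} Neeb cocycle and not merely its symmetric part.
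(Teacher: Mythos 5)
Your proposal is correct and, in its essential step, identical to the paper's one-line proof: differentiate $c$ to get $\al(u)dv-\al(v)du$, then split $\al$ into its symmetric and skew parts and use Leibniz to reassemble the skew part into $d(\om(u,v))$, recovering $\si_N$. The extra material you supply --- the explicit verification of the group $2$-cocycle identity via $\mu(g_2g_3)=\mu(g_2)+\Ad_{g_2}\mu(g_3)$ and the matching convention for the group $1$-cocycle --- is left implicit in the paper and is a worthwhile, correctly executed addition rather than a different approach.
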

\begin{proof}
The Lie algebra cocycle associated to the group cocycle $c$ is 
$\al(u)dv-\al(v)du=p(u,dv)-p(v,du)+d\om(u,v)=\si_N(u,v)$ for all $u,v\in C^\oo(S,\g)$.
\end{proof}

It is shown in \cite{Ne07} that for any simply connected Lie group $G$ integrating $\g$,
the current algebra extension defined with the Neeb cocycle
can be integrated to a current group extension. This is a special case of the proposition above,
since any Lie algebra 1-cocycle $\al$ on $\g$
can be integrated to a group 1-cocycle $a$ on the simply connected Lie group $G$.


\section{Sigma model symmetries via current algebra functors}\label{aeca}

\subsection{Abelian extensions of current algebras}

A differential graded space $V$ with an action of the dgla $C\g$, 
compatible with the differential (and with the grading) on $V$, 
is called a $\g$-differential space.
This means that $L(x)v=I(x)dv+dI(x)v$ for all $x\in\g$ and $v\in V$.
The main example of a differential graded space is 
the space of differential forms $V=\Om(M)$ on a $\g$-manifold $M$.

The semidirect product $C\g\ltimes V$ is again a dgla. By remark \ref{exact}, since $C\g$ is acyclic, the current algebra functors 
$\CA$ and $\SA$ applied to $C\g\ltimes V$  provide semidirect products of the current Lie algebra $C^\oo(S,\g)$.
For instance $\CA(S,C\g\ltimes V)=C^\oo(S,\g)\ltimes \CA(S,V)$,
for the natural (derived) action of the current algebra $\CA(S,C\g)=C^\oo(S,\g)$ on $\CA(S,V)=(\Om(S)\otimes V)^{-1}/(\Om(S)\otimes V)^{-1}_{exact}$:
\begin{align}\label{deac}
(\ph\otimes I(x))\cdot_d(\et\otimes v)&=d(\ph\otimes I(x))\cdot(\et\otimes v)\\
&=(\ph\otimes L(x)+d\ph\otimes I(x))\cdot(\et\otimes v)\nonumber\\
&=\ph\et\otimes L(x)v+(-1)^{|\et|}(d\ph\wedge\et)\otimes I(x)v,
\end{align}
for $\ph\in C^\oo(S)$ and $\et\in\Om(S)$. 

More generally, the current algebra functors provide abelian current algebra extensions when
applied to abelian dgla extensions of the cone $C\g$ by a $\g$-differential space $V$.
These are described by pairs
$(\om,\de)$, where $\om\in\La^2(C\g)^*\otimes V$ deformes the Lie bracket
and $\de\in (C\g)^*\otimes V$ deformes the differential,
satisfying the closedness condition $d_{tot}(\om+\de)=0$, where $d_{tot}$ is the sum of the 
Chevalley-Eilenberg differential $d_{C\g}$ for the Lie algebra $C\g$ and the differential $\bar d$
is induced by the differential graded spaces $C\g$ and $V$. 
This closedness condition is equivalent to 
\[
d_{C\g}\om=0,\quad d_{C\g}\de+\bar d\om=0,\quad \bar d\de=0.
\]
In particular $\om$ is a  2-cocycle on the cone.

One obtains the dgla $(C\g\ltimes_\om V)_{(\de)}$ with differential $\tilde d(a,v)=(da+\de a,dv)$
and Lie bracket 
$[(a,v),(b,w)]=([a,b],a\cdot w-b\cdot v+\om(a,b))$ for $a,b\in C\g$, so
\begin{align}\label{dtil}
\tilde dI(x)=L(x)+\de I(x),\quad \tilde dL(x)=\de L(x).
\end{align}

\begin{theorem}\label{deom}
The current algebra functor $\CA$ applied to the dgla $(C\g\ltimes_\om V)_{(\de)}$ provides 
an abelian Lie algebra extension of the current algebra $C^\oo(S,\g)$ 
with characteristic $\CA(S,V)$-valued cocycle 
\begin{align}\label{sod}
\si_{(\om,\de)}(u,v)= &\frac12\big(I(u)\cdot\de I(v)-I(v)\cdot\de I(u)\\
+&\om(I(du),I(v))-\om(I(dv),I(u))
+\om(I(u),L(v))-\om(I(v),L(u))\big).\nonumber
\end{align}
\end{theorem}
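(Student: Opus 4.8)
The plan is to compute the derived bracket on $\CA(S,(C\g\ltimes_\om V)_{(\de)})$ explicitly and extract its $\CA(S,V)$-valued component, which is the characteristic cocycle of the abelian extension. Since $(C\g\ltimes_\om V)_{(\de)}$ is acyclic as a complex (its underlying complex is that of $C\g$ tensored trivially with the acyclic $V$-part, or more precisely it retains the acyclicity of $C\g$ because $\de$ and $\om$ only deform the structure), Remark \ref{exact} applied to the short exact sequence $0\to V\to (C\g\ltimes_\om V)_{(\de)}\to C\g\to 0$ guarantees that $\CA$ produces a genuine short exact sequence of Lie algebras, hence an abelian extension $0\to\CA(S,V)\to\CA(S,(C\g\ltimes_\om V)_{(\de)})\to C^\oo(S,\g)\to 0$. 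The task reduces to identifying its cocycle.

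First I would fix the identification $\CA(S,C\g)\cong C^\oo(S,\g)$ via $\ph\otimes I(x)\leftrightarrow u=\ph x$, as in \eqref{cas}, and lift elements of $C^\oo(S,\g)$ to degree $-1$ elements of $\Om(S)\otimes(C\g\ltimes_\om V)$. The canonical lift of $u=\ph\otimes I(x)$ is just $\ph\otimes I(x)$ viewed in the extended dgla. Next I would apply the derived bracket formula \eqref{debra}, being careful that the differential is now the deformed $\tilde d$ from \eqref{dtil}, so that $\tilde d(\ps\otimes I(y))=d\ps\otimes I(y)+(-1)^{|\ps|}\ps\otimes\tilde dI(y)=d\ps\otimes I(y)+\ps\otimes(L(y)+\de I(y))$. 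Expanding $[\ph\otimes I(x),\tilde d(\ps\otimes I(y))]$ using the deformed bracket $[(a,v),(b,w)]=([a,b],a\cdot w-b\cdot v+\om(a,b))$ yields the degree $-1$ part, which reproduces the current bracket $[u,v]$ in $C^\oo(S,\g)$, plus a $\CA(S,V)$-valued remainder. That remainder, read in the $V$-component, is the raw cocycle before symmetrization.

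The main obstacle will be the antisymmetrization and sign bookkeeping needed to match the raw expression against the explicitly symmetrized formula \eqref{sod}. The derived bracket is not manifestly antisymmetric, so the genuine 2-cocycle is obtained by antisymmetrizing in $u$ and $v$ (equivalently, by passing to the quotient by exact forms, where the symmetric part becomes a coboundary); this is the source of the factor $\tfrac12$ and of the paired terms $\om(I(u),L(v))-\om(I(v),L(u))$ and $\om(I(du),I(v))-\om(I(dv),I(u))$. I would carefully track how the action terms $a\cdot w-b\cdot v$ and the deformation $\de I(x)$ contribute: the $\de$-terms give $I(u)\cdot\de I(v)-I(v)\cdot\de I(u)$, while the $\om$-terms, after using $\tilde d I(x)=L(x)+\de I(x)$ and the Leibniz expansion of $d$ across the tensor factors, produce both the $\om(I(u),L(v))$ family and the $\om(I(du),I(v))$ family. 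Finally I would verify that the resulting $\si_{(\om,\de)}$ is indeed a cocycle by invoking the closedness condition $d_{C\g}\om=0$, $d_{C\g}\de+\bar d\om=0$, $\bar d\de=0$, which ensures $(C\g\ltimes_\om V)_{(\de)}$ is a genuine dgla and hence that $\CA$ applied to it lands in the category of Lie algebras, making the extracted cocycle automatically closed.
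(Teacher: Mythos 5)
Your proposal is correct and follows essentially the same route as the paper: compute the $\tilde d$-derived bracket of lifts $\ph\otimes I(x)$, read off the $V$-valued remainder, and antisymmetrize by discarding the exact term $\tfrac12 d(\ph\ps\otimes\om(I(x),I(y)))$, whose identification with the symmetric part of the raw bracket is exactly where the closedness condition $d_{C\g}\de+\bar d\om=0$ enters. The only inessential wobble is your opening claim that the middle dgla is acyclic; what Remark \ref{exact} actually requires (and what holds) is acyclicity of the quotient $C\g$, which you do invoke correctly.
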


\begin{proof}
We compute the $\tilde d$-derived bracket, \ie the bracket on $\CA(S,(C\g\ltimes_\om V)_{(\de)})$:
\begin{align*}
[\ph\otimes I(x),\ps\otimes I(y)]_{\tilde d}&=[\ph\otimes I(x),
\ps\otimes(L(y)+\de I(y))+d\ps\otimes I(y)]\\
&=\ph\ps\otimes I([x,y])+\ph\ps\otimes I(x)\cdot\de I(y)\\
&+\ph\ps\otimes\om(I(x),L(y))+\ph d\ps\otimes\om(I(x),I(y)). 
\end{align*}
Using the identity which follows from $d_{C\g}\de+d\om=0$:
\begin{gather*}
d(\ph\ps\otimes\om(I(x),I(y)))=d(\ph\ps)\otimes\om(I(x),I(y))\\
+\ph\ps\otimes( I(x)\cdot\de I(y)-I(y)\cdot\de I(x)
+\om(I(x),L(y))+\om(I(y),L(x))),
\end{gather*}
after we mod out the exact term $\frac12d(\ph\ps\otimes\om(I(x),I(y)))$, one gets the cocycle
\begin{gather*}
\si_{(\om,\de)}(\ph\otimes I(x),\ps\otimes I(y))
=\frac12(\ph d\ps-\ps d\ph)\otimes\om(I(x),I(y))\\
+\frac12\ph\ps\otimes( I(x)\cdot\de I(y)-I(y)\cdot\de I(x)
+\om(I(x),L(y))-\om(I(y),L(x)))
\end{gather*}
that can be rewritten as \eqref{sod} for $u,v\in C^\oo(S,\g)$.
\end{proof}

A similar result holds for the $\SA$ functor (see also the exact sequence of Lie algebras \eqref{patr}):

\begin{proposition}\label{cor}
The current algebra functor $\SA$ applied to the dgla $(C\g\ltimes_\om v)_{(\de)}$ provides 
an abelian Lie algebra extension of the current algebra $C^\oo(S,\g)$ 
with characteristic $\SA(S,V)$-valued cocycle given by the differential
of the cocycle from theorem \ref{deom}.
\end{proposition}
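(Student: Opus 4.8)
The plan is to deduce the statement from the exact sequence \eqref{patr} together with Theorem \ref{deom}, rather than redo the bracket computation from scratch. The dgla $(C\g\ltimes_\om V)_{(\de)}$ fits into a short exact sequence of dglas
\[
0\to V\to (C\g\ltimes_\om V)_{(\de)}\to C\g\to 0,
\]
whose quotient $C\g$ is acyclic as a complex. Applying Remark \ref{exact} to the $\SA$ functor therefore yields a short exact sequence of Lie algebras
\[
0\to\SA(S,V)\to\SA(S,(C\g\ltimes_\om V)_{(\de)})\to\SA(S,C\g)=C^\oo(S,\g)\to 0,
\]
which exhibits $\SA(S,(C\g\ltimes_\om V)_{(\de)})$ as an abelian extension of the current algebra $C^\oo(S,\g)$ by $\SA(S,V)$. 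This settles the first assertion; it remains to identify the characteristic cocycle.

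For the cocycle I would use the naturality of the connecting map $d\colon\CA(S,A)\to\SA(S,A)$ appearing in \eqref{patr}. Since $d$ is a natural Lie algebra homomorphism induced by the total differential on $\Om(S)\otimes A$, applying it to $A=(C\g\ltimes_\om V)_{(\de)}$ produces a morphism between the abelian extension of Theorem \ref{deom} and the one above. On the base this morphism is the canonical identification $\CA(S,C\g)\cong C^\oo(S,\g)\cong\SA(S,C\g)$, which is itself induced by $d$ (it sends $\ph\otimes I(x)$ to $d\ph\otimes I(x)+\ph\otimes L(x)$) and is the identity after these identifications; on the fibre it is the map $d\colon\CA(S,V)\to\SA(S,V)$ induced by the total differential on $\Om(S)\otimes V$. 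A morphism of abelian extensions which is the identity on the base pushes the characteristic cocycle forward along the fibre map, so the $\SA$-cocycle equals $d$ applied to $\si_{(\om,\de)}$, namely the differential of the cocycle \eqref{sod}.

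As a concrete check, one can also argue directly, paralleling the proof of Theorem \ref{deom}. One lifts a current $u=\ph x$ to a $\tilde d$-closed degree-zero element of $\Om(S)\otimes(C\g\ltimes_\om V)_{(\de)}$ of the form $\ph\otimes L(x)+d\ph\otimes I(x)$ corrected by $V$-valued terms forced by the $\de$-twisted differential \eqref{dtil}, computes the bracket of two such lifts using $[\ph\otimes a,\ps\otimes b]=\ph\ps\otimes[a,b]$, and reads off the $\SA(S,V)$-valued cocycle. The identity $d_{C\g}\de+\bar d\om=0$ guarantees both that these lifts are genuinely closed and that the resulting cocycle is exactly the total differential of \eqref{sod}.

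The main obstacle is bookkeeping rather than conceptual: one must ensure that the chosen $\CA$-section maps under $d$ to a valid $\SA$-section, and track the interplay between the de Rham differential on $\Om(S)$ and the internal differential $\bar d$ on $V$ through the $\de$-twist, so that the phrase \emph{the differential of the cocycle} is correctly interpreted as the image of $\si_{(\om,\de)}$ under $d\colon\CA(S,V)\to\SA(S,V)$.
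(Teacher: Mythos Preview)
Your proposal is correct and follows exactly the route the paper indicates: the paper gives no explicit proof of this proposition, only the remark ``A similar result holds for the $\SA$ functor (see also the exact sequence of Lie algebras \eqref{patr})'', which is precisely your argument via the natural Lie algebra homomorphism $d\colon\CA(S,A)\to\SA(S,A)$ pushing the $\CA$-cocycle forward to the $\SA$-cocycle. Your elaboration of the morphism-of-extensions argument and the direct-computation check simply flesh out what the paper leaves implicit.
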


First we present an example considered in \cite{AlSe10}, where $\om=0$.
Let $e\in V^1$ such that $p=de$ is basic with respect to the $C\g$ action, 
\ie $I(x)p=L(x)p=0$ for all $x\in\g$.
We define $\de(I(x))=-I(x)e$ and $\de(L(x))=L(x)e$
and we deform the differential on $C\g\ltimes V$ as in \eqref{dtil}  to
\begin{equation}\label{tilded}
\tilde dI(x)=L(x)-I(x)e,\quad\tilde dL(x)=L(x)e.
\end{equation}
This new dgla is denoted by $C_e\g=(C\g\ltimes V)_{(e)}$

\begin{corollary}\label{sige}
The current algebra functor $\CA$ applied to the dgla $C_e\g$ provides 
an abelian Lie algebra extension of the current algebra $C^\oo(S,\g)$ 
with characteristic 2-cocycle 
\begin{equation}\label{unu}
(u,v)\mapsto -I(u)I(v)e\in \CA(S,V),
\end{equation}
while the characteristic cocycle for the  current algebra functor $\SA$ applied to $C_e\g$ is 
\begin{equation}\label{doi}
(u,v)\mapsto -dI(u)I(v)e\in \SA(S,V).
\end{equation}
\end{corollary}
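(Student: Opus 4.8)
The plan is to recognize $C_e\g$ as a special case of the construction underlying Theorem \ref{deom} and then to specialize the general cocycle formula \eqref{sod}. Concretely, $C_e\g=(C\g\ltimes V)_{(e)}$ is exactly $(C\g\ltimes_\om V)_{(\de)}$ with vanishing bracket deformation $\om=0$ and with the differential deformation $\de I(x)=-I(x)e$, $\de L(x)=L(x)e$, so that the deformed differential \eqref{dtil} becomes \eqref{tilded}. Once this identification is made, both statements of the corollary follow by substitution: the $\CA$-cocycle from Theorem \ref{deom} and, via Proposition \ref{cor}, the $\SA$-cocycle as its differential.

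First I would check that $(0,\de)$ really satisfies the closedness condition $d_{tot}(\om+\de)=0$, equivalently that $\tilde d$ in \eqref{tilded} squares to zero. Since $d_{C\g}I(x)=L(x)$ and $d_{C\g}L(x)=0$, the relation $\tilde d^2 I(x)=0$ amounts to $\de L(x)=-d_V\de I(x)$, and $\tilde d^2 L(x)=0$ amounts to $d_V\de L(x)=0$. Using the Cartan relations $L(x)=d_V I(x)+I(x)d_V$ and $[d_V,L(x)]=0$ for the $C\g$-action on $V$, these reduce to $d_V\bigl(I(x)e\bigr)=L(x)e-I(x)p$ and $d_V\bigl(L(x)e\bigr)=L(x)p$; both give the required identities precisely because $p=de$ is basic, i.e. $I(x)p=L(x)p=0$. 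This is the step where the hypothesis on $e$ is consumed, and it is the main place where signs must be tracked carefully.

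Next I would substitute $\om=0$ and $\de I(v)=-I(v)e$ into the formula \eqref{sod}. All four $\om$-terms vanish, leaving
$$
\si_{(\om,\de)}(u,v)=\tfrac12\bigl(I(u)\cdot\de I(v)-I(v)\cdot\de I(u)\bigr)=\tfrac12\bigl(-I(u)I(v)e+I(v)I(u)e\bigr).
$$
The key simplification is the relation $[I(x),I(y)]=0$ in $C\g$, which says the contractions act on the $\g$-differential space $V$ as anticommuting odd operators, so $I(v)I(u)e=-I(u)I(v)e$. The two terms then collapse to a single one, yielding the cocycle $(u,v)\mapsto -I(u)I(v)e\in\CA(S,V)$, which is \eqref{unu}.

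Finally, for the $\SA$ functor I would invoke Proposition \ref{cor}: the characteristic cocycle is the image of \eqref{unu} under the connecting differential $d\colon\CA(S,V)\to\SA(S,V)$ of the exact sequence \eqref{patr}, namely $(u,v)\mapsto -dI(u)I(v)e\in\SA(S,V)$, which is \eqref{doi}. I expect the only genuinely delicate points to be the graded-sign bookkeeping in the verification of $\tilde d^2=0$ and the correct application of the anticommutation $I(u)I(v)=-I(v)I(u)$ when merging the two terms; the remainder is direct substitution into the already-established formulas.
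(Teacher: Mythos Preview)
Your proposal is correct and follows essentially the same route as the paper: specialize Theorem \ref{deom} to $\om=0$, $\de I(x)=-I(x)e$, $\de L(x)=L(x)e$, use the anticommutation $I(v)I(u)=-I(u)I(v)$ to collapse the two surviving terms into $-I(u)I(v)e$, and then invoke Proposition \ref{cor} for the $\SA$ statement. The only difference is that you additionally spell out the verification that $(\om,\de)=(0,\de)$ satisfies the closedness condition (equivalently $\tilde d^{\,2}=0$), which the paper asserts before the corollary without proof; your check via the Cartan relation and the basic hypothesis on $p=de$ is correct and a welcome bit of extra care.
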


\begin{proof}
The first formula follows from theorem \ref{deom}: 
\begin{align*}
\si_{(\om,\de)}(u,v)&=\tfrac12\left(I(u)\cdot\de I(v)-I(v)\cdot\de I(u)\right)
=\tfrac12\left(I(v)I(u)e-I(u)I(v)e\right)\\
&=-I(u)I(v)e,
\end{align*}
while for the second formula we just apply Proposition \ref{cor}.
\end{proof}

\begin{example}
\rm{
The Fadeev-Mickelsson-Shatashvili (FMS) Lie algebra is a 1-dimensional extension of a current algebra extension 
by the module of functionals on the space of connections, 
and can be obtained by the current algebra functor $\CA$ as explained in \cite{AlSe10}.
We present here a truncated FMS extension, as application of theorem \ref{deom}.

The cone $C\g^*$ of the Lie algebra dual $\g^*$,
with elements denoted by $\ell(\xi)$ and $\iota(\xi)$  for $\xi\in\g^*$, is 
a $\g$-differential space for the $C\g$-action: 
\begin{align*}
L(x)\cdot\ell(\xi)&=\ell(\ad_x^*\xi)\quad\quad I(x)\cdot\ell(\xi)=\iota(\ad_x^*\xi)\\
L(x)\cdot\iota(\xi)&=\iota(\ad_x^*\xi)\quad\quad I(x)\cdot\iota(\xi)=0.
\end{align*}
Each element $p\in(S^3\g^*)^\g$ provides a Lie algebra cocycle $\om$ on $C\g$
\[
\om(I(x),I(y))=(I(x),I(y))\mapsto \ell(p(x,y,\cdot))\in C\g^*[2],
\]
which defines  a dgla extension $B=C\g\ltimes_\om C\g^*[2]$ for
the natural differential $dI(x)=L(x)$ and $d\iota(\xi)=\ell(\xi)$.

We apply theorem \ref{deom} to the pair $\om$ and $\de=0$ and we get that the abelian Lie algebra extension 
$\CA(S,B)$  of the current algebra $C^\oo(S,\g)$ by the module $\CA(S,C\g^*[2])$,
described by the characteristic cocycle 
\begin{multline*}
\si(u,v)=\tfrac12\left(\om(I(du),I(v))-\om(I(dv),I(u))\right)\\
=\tfrac12 \ell\left(p(du,v,\cdot)-p(dv,u,\cdot)\right)=
-i(p(du,dv,\cdot))+\tfrac12 d(i(p(du,v,\cdot)-p(dv,u,\cdot))).
\end{multline*}
When $S$ is 3-dimensional, $\CA(S,C\g^*[2])=\Om^2(S,\g^*)$
is the space of  linear functionals on the space  $\Om^1(S,\g)$ of $\g$-valued 1-forms, and we get the  FMS cocycle:
\[
\si(u,v)[A]= \int_S p(du,dv,A),\quad A\in\Om^1(S,\g).
\]
The dgla $B_{FMS}$ is the extension of the dgla $B=C\g\ltimes_\om C\g^*[2]$ by $\RR[4]$, given by the cocycle
$(I(x),\iota(\xi))\mapsto \xi(x)$.
Then $\CA(S,B_{FMS})$ is the truncated FMS current algebra, namely the central extension 
of  $\CA(S,B)=C^\oo(S,\g)\ltimes_\si\Om^2(S,\g^*)$ by $\RR$, 
with cocycle defined by
$$(f,\ga)\in C^\oo(S,\g)\x\Om^2(S,\g^*)\mapsto (d\ga,f)\in\Om^3(S)/\Om^3(S)_{exact}=\RR.$$ 
}
The abelian Lie subalgebra $\Om^2(S,\g^*)\oplus\RR$ of $\CA(S,B)$ is the space of (inhomogeneous) linear functions on the affine space $\Om^1(S,\g)$ of $\g$-valued connections. 

\end{example}


\subsection{Sigma model symmetries}

In this section we show that the Lie algebra of symmetries for the sigma model \cite{AlSt05} 
can be obtained with the current algebra functor $\CA$ applied to a semidirect product dgla with deformed differential, 
as in corollary \ref{sige}.

\paragraph{Cotangent bundle of manifolds of smooth maps.}
Let $\F=C^\oo(S,M)$ be the Fr\'echet manifold of smooth maps, where $S$ is a compact $k$-dimensional manifold.
By the hat calculus presented in the appendix, Section \ref{hatcal},
a closed $(k+2)$-form $H$ on $M$ defines a closed 2-form $\hat H=\widehat{H\cdot 1}$
on the manifold $C^\oo(S,M)$. This means that $\hat H(u_f,v_f)
=\int_Si_{v_f}i_{u_f}H$ for all $u_f,v_f$ vector fields on $M$ along $f\in\F$.
The 2-form $\hat H$ can be used to twist the canonical symplectic form $\om$ on the cotangent bundle 
$p:T^*\F\to\F$ in the usual way $\om_{\hat H}=\om+p^*\hat H$.
We denote by $\{\ ,\ \}_{\hat H}$ the corresponding Poisson bracket.
Section \ref{cotg} in the appendix collects some facts about the twisted cotangent bundle,
that will be used below for $T^*\F$.

The function on the cotangent bundle $T^*\F$
$$
F_{\et\otimes\al}=p^*\widehat{\al\cdot\et}
$$ 
is associated to differential forms $\et$ on $S$ and $\al$ on $M$ with $|\al|+|\et|=k$.
Here $\widehat{\al\cdot\et}(f)=\int_S f^*\al\wedge\et$ is a function on $\F$ obtained by the hat calculus. 
The identity $\widehat{(d\be)\cdot\et}+(-1)^{|\be|}\widehat{\be\cdot d\et}=0$ for $|\be|+|\et|+1=k$
implies the relation 
\begin{equation}\label{re}
F_{\et\otimes d\be}+(-1)^{|\be|}F_{d\et\otimes\be}=0.
\end{equation}

Another function on the cotangent bundle $T^*\F$
$$
J_{\ph\otimes v}=\ev_{\ph\bar v}
$$ 
is associated to a function $\ph\in C^\oo(S)$ and a vector field $v\in\X(M)$,
namely it is the function on $T^*\F$ defined by evaluation at the vector field 
$\ph\bar v:f\mapsto \ph(v\o f)$ on $\F$, as in remark \ref{gene}.

There is an injective Lie algebra homomorphism (Proposition \ref{cinci})
\[
\Ph:Y+f\in \X(\F)\ltimes_{\hat H} C^\oo(\F)\mapsto \ev_Y+p^*f\in (C^\oo(T^*\F),\{\ ,\ \}_{\hat H}).
\]
Notice that the functions $F_{\et\otimes\al}=\Ph(\widehat{\al\cdot\et})$ for $\widehat{\al\cdot\et}\in C^\oo(\F)$
and $J_{\ph\otimes v}=\Ph(\ph\bar v)$ for $\ph\bar v\in\X(\F)$ all belong to 
the Lie subalgebra $\Im\Ph$ which is described in the next proposition.

\begin{proposition}\label{pbra}
The  Poisson brackets of functions of type $F_{\et\otimes\al}$ and $J_{\ph\otimes v}$ on the cotangent bundle $T^*C^\oo(S,M)$ with twisted symplectic form $\om_{\hat H}=\om+p^*\hat H$ are:
\begin{enumerate}
\item $\{J_{\ph\otimes v},J_{\ps\otimes w}\}_{\hat H} =J_{\ph\ps\otimes[v,w]}-F_{\ph\ps\otimes i_vi_wH}$;
\item $\{J_{\ph\otimes v},F_{\et\otimes\al}\}_{\hat H} =F_{\ph\et\otimes L_v\al}-(-1)^{|\al|}F_{d\ph\wedge\et\otimes i_v\al}$;
\item $\{F_{\et\otimes\al},F_{\et'\otimes\al'}\}_{\hat H} =0$.
\end{enumerate}
\end{proposition}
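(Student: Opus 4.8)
The plan is to compute each of the three brackets directly using the homomorphism $\Ph$ and the structure of the twisted Poisson bracket $\{\ ,\ \}_{\hat H}$ collected in the appendix. Since $F_{\et\otimes\al}=\Ph(\widehat{\al\cdot\et})$ comes from a \emph{function} on $\F$ and $J_{\ph\otimes v}=\Ph(\ph\bar v)$ comes from a \emph{vector field} on $\F$, the three cases correspond to the three types of brackets in the semidirect product $\X(\F)\ltimes_{\hat H}C^\oo(\F)$: vector field with vector field, vector field with function, and function with function. Because $\Ph$ is a Lie algebra homomorphism, I would pull each Poisson bracket back to a bracket in this semidirect product, where the computations become classical differential geometry on the Fr\'echet manifold $\F$, and then translate the answer back through the hat calculus.

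First I would treat case 3, the easiest: two functions $\widehat{\al\cdot\et}$ and $\widehat{\al'\cdot\et'}$ on $\F$ Poisson-commute because the semidirect product bracket of two elements of the abelian ideal $C^\oo(\F)$ vanishes, so $\{F_{\et\otimes\al},F_{\et'\otimes\al'}\}_{\hat H}=0$ immediately. For case 2, the bracket of a vector field $\ph\bar v$ with a function $\widehat{\al\cdot\et}$ in the semidirect product is the Lie derivative $(\ph\bar v)\cdot\widehat{\al\cdot\et}$, so I would compute the directional derivative of the hat-function $\widehat{\al\cdot\et}$ along the vector field $\ph\bar v$. Using $\widehat{\al\cdot\et}(f)=\int_S f^*\al\wedge\et$ and differentiating the pullback along the flow of $v$, Cartan's formula $L_v=i_v d+d i_v$ should produce the two terms $F_{\ph\et\otimes L_v\al}$ and the boundary-type term involving $i_v\al$; the sign $(-1)^{|\al|}$ and the appearance of $d\ph\wedge\et$ will come from integration by parts transferring a derivative from $\al$ onto $\ph$, via the relation \eqref{re}.

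The main obstacle is case 1, the bracket of two vector fields, because this is where the twisting by $\hat H$ enters. In the untwisted cotangent bundle the bracket of $\ev_{\ph\bar v}$ and $\ev_{\ps\bar w}$ would simply be $\ev_{[\ph\bar v,\ps\bar w]}=J_{\ph\ps\otimes[v,w]}$, but the semidirect product $\X(\F)\ltimes_{\hat H}C^\oo(\F)$ carries the cocycle given by $\hat H$ evaluated on the two vector fields. I would compute $\hat H(\ph\bar v,\ps\bar w)$ using $\hat H(u_f,v_f)=\int_S i_{v_f}i_{u_f}H$, which yields $\int_S\ph\ps\, f^*(i_w i_v H)$, recognizable as $-F_{\ph\ps\otimes i_vi_wH}$ after fixing the orientation of the contractions. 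The only delicate points here are bookkeeping the sign conventions for the symplectic cocycle and verifying that $[\ph\bar v,\ps\bar w]=\ph\ps\,\overline{[v,w]}$ plus terms that either cancel or are absorbed; I expect that the Leibniz terms in the bracket of the vector fields $\ph\bar v$ and $\ps\bar w$ cancel against each other by symmetry, leaving precisely $\ph\ps\,\overline{[v,w]}$, and that the cocycle contributes exactly the $-F_{\ph\ps\otimes i_vi_wH}$ correction.
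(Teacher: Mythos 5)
Your proposal is correct and follows essentially the same route as the paper: both reduce the three Poisson brackets, via the homomorphism $\Ph$ of Proposition \ref{cinci}, to the three types of brackets in the abelian extension $\X(\F)\ltimes_{\hat H}C^\oo(\F)$, and then evaluate the vector-field/function and cocycle terms with the hat-calculus identities (your Cartan-formula computation in case 2 is exactly the derivation of \eqref{tria} in Remark \ref{gene}). Your remark about ``fixing the orientation of the contractions'' in case 1 is well taken, since $\hat H(\ph\bar v,\ps\bar w)=\widehat{(i_wi_vH)\cdot\ph\ps}=-\widehat{(i_vi_wH)\cdot\ph\ps}$ is precisely what produces the minus sign in the stated formula.
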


\begin{proof}
For the proof we use the fact that $\Ph$ is a Lie algebra homomorphism,
as well as the hat calculus from section \ref{hatcal}.
\begin{align*}
\{J_{\ph\otimes v},J_{\ps\otimes w}\}_{\hat H}
&=\{\Ph(\ph \bar v),\Ph(\ps\bar w)\}_{\hat H}
=\Ph([\ph\bar v,\ps\bar w])+\Ph(\hat H(\ph\bar v,\ps\bar w))\\
&=\Ph(\ph\ps\overline{[v,w]})+\Ph(\widehat{i_vi_wH\cdot\ph\ps})
=J_{\ph\ps\otimes[v,w]}+F_{\ph\ps\otimes i_vi_wH}.
\end{align*}
The next computation uses the identity \eqref{tria} at step four
\begin{align*}
\{J_{\ph\otimes v},F_{\et\otimes\al}\}
&=\{\Ph(\ph\bar v),\Ph(\widehat{\al\cdot\et})\}_{\hat H}
=\Ph([\ph\bar v,\widehat{\al\cdot\et}])
=\Ph(L_{\ph\bar v}\widehat{\al\cdot\et})\\
&=\Ph(\widehat{L_v\al\cdot\ph\et})-(-1)^{|\al|}\Ph(\widehat{i_v\al\cdot(d\ph\wedge\et)})\\
&=F_{\ph\et\otimes L_v\al}-(-1)^{|\al|}F_{d\ph\wedge\et\otimes i_v\al}.
\end{align*}
The third identity is an immediate consequence of the fact that $\widehat{\al\cdot\et},\widehat{\al'\cdot\et'}\in C^\oo(\F)$
lie in the abelian part of the abelian Lie algebra extension by $\hat H$.
\end{proof}
This is the higher dimensional version of the Lie algebra of symmetries for the sigma model \cite{AlSt05},
namely when the circle is replaced by a $k$-dimensional compact manifold $S$.

\begin{remark} {\rm
In the special case when  $S$ is a circle and $H$ a closed 3-form, 
one gets the following generating functions on the cotangent bundle $T^*LM$
of the loop space $LM$: $J_{\ph\otimes v}, F_{\ph\otimes\al}, F_{\et\otimes f}$,
where $\ph\in C^\oo(S)$,  $v\in\X(M)$, $\et\in\Om^1(S)$, $f\in C^\oo(M)$, $\al\in\Om^1(M)$. 
In \cite{AlSt05} the first two functions were put together  into a function $J_{\ph\otimes(v,\al)}$,
where $(v,\al)\in\Ga(TM\oplus T^*M)$ is a section of the Courant algebroid $TM\oplus T^*M$.
Their bracket involve the twisted Courant bracket
\[
[(v_1,\al_1),(v_2,\al_2)]=([v_1,v_2],L_{v_1}\al_2-i_{v_2}d\al_1+i_{v_1}i_{v_2}H),
\] 
but provide an "anomalous" term involving the inner product
\[
((v_1,\al_1),(v_2,\al_2))=i_{v_1}\al_2-i_{v_2}\al_1\in C^\oo(M),
\] 
because
\[
\{J_{\ph_1\otimes(v_1,\al_1)},J_{\ph_2\otimes(v_2,\al_2)}\}=J_{\ph_1\ph_2\otimes[(v_1,\al_1),(v_2,\al_2)]}
+F_{\ph_1d\ph_1\otimes((v_1,\al_1),(v_2,\al_2))}.
\]
It works similarly for $k>1$: on $\Ga(TM\oplus\La^kT^*M)$ there is a bracket
and an inner product with values in $\Om^{k-1}(M)$, both defined by the same formulas as above.
The anomalous term $F_{\ph_1d\ph_1\otimes((v_1,\al_1),(v_2,\al_2))}$ comes 
now from $\Om^1(M)\otimes\Om^{k-1}(M)$.
}
\end{remark}


\paragraph{An abelian current algebra extension.}
Let $S$ be a smooth $k$-dimensional manifold and $M$ a $\g$-manifold. 
Then the space $V=\Om(M)[k+1]$ of differential forms on $M$ with a $(k+1)$ shift in degree
is a $\g$-differential space. We have $I(x)\al=i_{x_M}\al$ and $L(x)\al=L_{x_M}\al$,
where $x_M\in\X(M)$ denotes the infinitesimal generator  for $x\in\g$.
 
The closed $(k+2)$-form $H$ on $M$ is an element in $V^1$, so it can be used for deforming 
the derivation on $C\g\ltimes V$ as in \eqref{tilded}:
\[
\tilde dI(x)=L(x)-i_{x_M}H,\quad \tilde dL(x)=L_{x_M}H.
\]
One obtains the dgla 
$(C\g\ltimes \Om(M)[k+1])_{(H)}$.

\begin{proposition}\label{prin}
The Lie algebra $\CA(S,(C\g\ltimes \Om(M)[k+1])_{(H)})$  is an abelian extension
of the current algebra $C^\oo(S,\g)$ by the module
\begin{equation*}
\CA(S,\Om(M)[k+1])=\left(\sum_{l=0}^k\Om^l(S)\otimes\Om^{k-l}(M)\right)\Big/
d\left(\sum_{l=0}^{k-1}\Om^l(S)\otimes\Om^{k-l-1}(M)\right),
\end{equation*}
with characteristic 2-cocycle 
$$
\si_H(u,v)=i_{v_M}i_{u_M}H,\quad u,v\in C^\oo(S,\g).
$$
The Lie algebra bracket written on its generators $\ph\otimes I(x)\in C^\oo(S,\g)$  
and $\et\otimes \al\in\CA(S,\Om(M)[k+1])$, where  $\ph\in C^\oo(S)$,  $x\in\g$, $\et\in\Om^l(S)$, $\al\in\Om^{k-l}(M)$, $l=0,\dots,k$,
are
\begin{align*}
[\ph\otimes I(x),\ps\otimes I(y)]&=\ph\ps\otimes I([x,y])+\ph\ps\otimes i_{x_M}i_{y_M} H\\
[\ph\otimes I(x),\et\otimes \al]&=\ph\et\otimes L_{x_M}\al+(-1)^{|\et|}d\ph\wedge\et\otimes i_{x_M}\al\\
[\et\otimes \al,\et'\otimes \al']&=0
\end{align*}
with relations 
$$d(\et\otimes\be)=d\et\otimes\be+\et\otimes d\be=0.$$ 
\end{proposition}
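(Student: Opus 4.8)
The plan is to recognize this Proposition as the special case of Corollary \ref{sige} obtained by taking $V=\Om(M)[k+1]$ and $e=H$, and then to read off every stated formula. First I would check the hypotheses of Corollary \ref{sige}. The space $V=\Om(M)[k+1]$ is a $\g$-differential space with $I(x)\al=i_{x_M}\al$ and $L(x)\al=L_{x_M}\al$, as already recorded. Because of the degree shift, a $(k+2)$-form sits in degree $(k+2)-(k+1)=1$, so $H\in V^1$; and since $H$ is closed, $de=dH=0$ is trivially basic with respect to the $C\g$-action. Thus the deformation $\tilde dI(x)=L(x)-i_{x_M}H$, $\tilde dL(x)=L_{x_M}H$ is exactly the one of \eqref{tilded}, and $(C\g\ltimes\Om(M)[k+1])_{(H)}$ is an instance of $C_e\g$. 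Since $C\g$ is acyclic, Remark \ref{exact} guarantees that applying $\CA$ produces an abelian extension of $\CA(S,C\g)=C^\oo(S,\g)$ by the module $\CA(S,V)$, which is the first assertion.

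Next I would identify the module $\CA(S,V)$ by a degree count. By \eqref{ca}, $\CA(S,V)$ is the degree $-1$ part of $\Om(S)\otimes V$ modulo the image of the total differential on the degree $-2$ part. An element $\et\otimes\al$ with $\et\in\Om^l(S)$ and $\al\in\Om^m(M)$ carries total degree $l+m-(k+1)$; this equals $-1$ exactly when $l+m=k$, and equals $-2$ exactly when $l+m=k-1$. Hence the numerator is $\sum_{l=0}^k\Om^l(S)\otimes\Om^{k-l}(M)$, and the subspace to be quotiented out is the image of the total de Rham differential $d(\et\otimes\be)=d\et\otimes\be+\et\otimes d\be$ acting on $\sum_{l=0}^{k-1}\Om^l(S)\otimes\Om^{k-l-1}(M)$. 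This is precisely the displayed module, together with the relation $d(\et\otimes\be)=0$.

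Finally I would extract the cocycle and the three brackets. Corollary \ref{sige} with $e=H$ gives at once the characteristic cocycle $\si_H(u,v)=-I(u)I(v)H=i_{v_M}i_{u_M}H$. The three bracket formulas then follow by direct substitution. The bracket of two copies of $C^\oo(S,\g)$ is the $\tilde d$-derived bracket \eqref{debra} of Theorem \ref{deom}: since $\tilde dI(y)=L(y)-i_{y_M}H$, one gets $\ph\ps\otimes I([x,y])+\ph\ps\otimes[I(x),-i_{y_M}H]$, and $[I(x),-i_{y_M}H]=-i_{x_M}i_{y_M}H$ supplies the $H$-term. The mixed bracket is the derived action \eqref{deac}, in which the $-i_{x_M}H$ piece of $\tilde d$ contributes nothing because $V$ is an abelian ideal, leaving $\ph\et\otimes L_{x_M}\al+(-1)^{|\et|}d\ph\wedge\et\otimes i_{x_M}\al$. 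The bracket of two module elements vanishes for the same reason, $V$ being abelian.

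The routine but delicate part will be the sign bookkeeping: keeping the degree shift $[k+1]$ straight so that the module and its exact subspace come out correctly, and tracking the Koszul and interior-product signs in the derived bracket and action consistently (in particular the $(-1)^{|\et|}$ in the mixed term and the ordering $i_{x_M}i_{y_M}$ versus $i_{y_M}i_{x_M}$ relating $\si_H$ to the $H$-term in the first bracket). Nothing conceptually new is required beyond Corollary \ref{sige}; the whole content is the correct reading of $V=\Om(M)[k+1]$ and of the closed form $H$ within that framework.
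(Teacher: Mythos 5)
Your proposal is correct and follows essentially the same route as the paper: the paper's proof likewise reduces everything to Corollary \ref{sige} for the cocycle, the derived action \eqref{deac} for the mixed bracket, and the abelianness of $\CA(S,\Om(M)[k+1])$ for the last bracket, with the module identified by the same degree count. Your sign bookkeeping ($-i_{x_M}i_{y_M}H=i_{y_M}i_{x_M}H$) is the one consistent with the displayed cocycle $\si_H(u,v)=i_{v_M}i_{u_M}H$, so no gap there.
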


\begin{proof}
The expression of the cocycle $\si_H$ on the current algebra follows from Corollary \ref{sige}. 
The first identity involves the cocycle $\si_H$, the second identity
involves the derived action \eqref{deac} of $C^\oo(S,\g)$ on $\CA(S,\Om(M)[k+1])$, 
while the third identity reflects the fact that $\CA(S,\Om(M)[k+1])$ is abelian.
\end{proof}

\begin{remark}{\rm
When $\g=\X(M)$, the Lie algebra $\CA(S,(C\g\ltimes \Om(M)[k+1])_{(H)})$ from Proposition \ref{prin} 
is isomorphic to the Lie algebra of sigma model symmetries from Proposition \ref{pbra},
through the identifications:
$J_{\ph\otimes v}$ with $\ph\otimes I(v)$, 
and $F_{\et\otimes\al}$ with $(-1)^{|\et||\al|}\et\otimes\al$.
}
\end{remark}

\begin{theorem}
Let $H\in\Om^{k+2}(M)$ be a closed differential form. 
There is an isomorphism between the Lie algebra obtained with the current functor $\CA$  applied 
to the dgla $(C\X(M)\ltimes\Om(M)[k+1])_{(H)}$
and the Lie algebra of sigma model symmetries, \ie the Lie algebra of functions on the symplectic manifold 
$(T^*C^\oo(S,M),\ \om_{\hat H}=\om+p^*\hat H)$ generated by 
$J_{\ph\otimes v}=\ph\bar v$ and 
$F_{\et\otimes\al}=p^*\widehat{\al\cdot\et}$, 
where $\ph\in C^\oo(S)$,  $v\in\X(M)$, $\et\in\Om^l(S)$, $\al\in\Om^{k-l}(M)$, $l=0,\dots,k$.

In particular the Lie algebra of sigma model symmetries  is an abelian extension
of the current algebra $C^\oo(S,\X(M))$ by the module 
$$\CA(S,\Om(M)[k+1])=\Om^k(S\x M)/\Om^k(S\x M)_{exact},$$
defined by the 2-cocycle 
\begin{equation}\label{sig}
\si_H(u,v)=i_{v}i_{u}H,\quad u,v\in C^\oo(S,\X(M)). 
\end{equation}
\end{theorem}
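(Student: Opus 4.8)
The plan is to observe that both Lie algebras in the statement already come with explicit brackets---those of $\CA(S,(C\X(M)\ltimes\Om(M)[k+1])_{(H)})$ from Proposition \ref{prin} and those of the sigma model symmetries from Proposition \ref{pbra}---so that the whole content reduces to checking that the identification announced in the preceding remark is a Lie algebra isomorphism, and then to rewriting the module. First I would specialize Proposition \ref{prin} to $\g=\X(M)$: since the infinitesimal generator $x_M$ of $x\in\X(M)$ is $x$ itself, the $C\X(M)$-action on $\Om(M)[k+1]$ is $I(x)\al=i_x\al$, $L(x)\al=L_x\al$, and the cocycle of that proposition becomes exactly $\si_H(u,v)=i_vi_uH$ of \eqref{sig}. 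On the other side, Proposition \ref{pbra} shows that the functions $J_{\ph\otimes v}$ and $F_{\et\otimes\al}$ close under $\{\ ,\ \}_{\hat H}$ and so span the Lie algebra of sigma model symmetries.

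Next I would define the candidate isomorphism $\Ps$ on generators by $\ph\otimes I(v)\mapsto J_{\ph\otimes v}$ and $\et\otimes\al\mapsto(-1)^{|\et||\al|}F_{\et\otimes\al}$, and check that it is well defined on the module factor. Here $\CA(S,\Om(M)[k+1])$ is the quotient of $\bigoplus_{l=0}^k\Om^l(S)\otimes\Om^{k-l}(M)$ by the exact elements $d\et\otimes\be+(-1)^{|\et|}\et\otimes d\be$; a short Koszul-sign computation shows that the twist by $(-1)^{|\et||\al|}$ sends these precisely to the combinations $F_{\et\otimes d\be}+(-1)^{|\be|}F_{d\et\otimes\be}$, which vanish by \eqref{re}. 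Thus $\Ps$ descends to the quotient, and because it acts diagonally with invertible coefficients on the spanning sets it is a linear isomorphism.

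Then I would verify that $\Ps$ intertwines the three brackets. The $J$--$J$ bracket of Proposition \ref{pbra} should match the $\CA$-bracket of Proposition \ref{prin}: its first term reproduces $I([v,w])$ and its anomalous term reproduces the cocycle $\si_H$. The $J$--$F$ bracket should reproduce the derived action \eqref{deac} of $C^\oo(S,\g)$ on the module, the two summands matching after the twist $(-1)^{|\et||\al|}$ and the identity $-(-1)^{|\al|}=(-1)^{|\al|-1}$ are taken into account. The $F$--$F$ bracket is zero on both sides. I expect this sign bookkeeping to be the only real obstacle; it is in fact forced to come out right by the very computation that makes $\Ps$ well defined, so the two checks reinforce each other.

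Finally, for the ``in particular'' statement I would invoke the K\"unneth isomorphism $\Om(S)\,\hat\otimes\,\Om(M)\cong\Om(S\x M)$ for the completed (projective) tensor product, under which the total differential becomes the de Rham differential of $S\x M$. This identifies the degree $-1$ part $\bigoplus_{l=0}^k\Om^l(S)\otimes\Om^{k-l}(M)$ with $\Om^k(S\x M)$ and the $d$-image of the degree $-2$ part $\bigoplus_{l=0}^{k-1}\Om^l(S)\otimes\Om^{k-1-l}(M)$ with $\Om^k(S\x M)_{exact}$, giving $\CA(S,\Om(M)[k+1])\cong\Om^k(S\x M)/\Om^k(S\x M)_{exact}$, as claimed.
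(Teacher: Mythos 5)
Your proposal is correct and follows exactly the route the paper intends: the theorem is stated without proof, as a combination of Proposition \ref{pbra}, Proposition \ref{prin} specialized to $\g=\X(M)$, and the identification $J_{\ph\otimes v}\leftrightarrow\ph\otimes I(v)$, $F_{\et\otimes\al}\leftrightarrow(-1)^{|\et||\al|}\et\otimes\al$ announced in the preceding remark. Your sign bookkeeping for the well-definedness on the quotient (matching the exact elements with relation \eqref{re}) and for the $J$--$F$ bracket, together with the completed-tensor-product identification of the module with $\Om^k(S\x M)/\Om^k(S\x M)_{exact}$, supplies precisely the details the paper omits.
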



\paragraph{Current group extension.}
As it is observed in \cite{Ne07}, an obvious source of 2-cocycles on Lie algebras of vector fields
are the closed differential forms. Let $G$ be a Lie group acting on a smooth manifold $N$
and let $x_N\in \X(N)$ be the infinitesimal generator for $x\in\g$. 
Then for any closed differential form $\om\in\Om^{k+2}(N)$,
\[
\om^{[2]}:\g\x\g\to\Om^k(N)/\Om^k(N)_{exact},\quad \om^{[2]}(x,y)=i_{y_N}i_{x_N} H
\]
is a Lie algebra 2-cocycle.

\begin{theorem}\label{kh}{\rm \cite{Ne07}}
Whenever the cohomology class of the closed form $\om$ is integral,
the Lie algebra extension defined by the cocycle $\om^{[2]}$ integrates to an abelian Lie group extension
of the universal covering of $G_0$, the identity component of $G$,
by the quotient group $(\Om^k(N)/\Om^k(N)_{exact})/H^k(N,\ZZ)$, where
\[
H^k(N,\ZZ)=\Hom(H_k(N),\ZZ)=\left\{[\al]\in H^k_{dR}(N):\int_{Z_k(N)}\al\in\ZZ\right\}.
\]
\end{theorem}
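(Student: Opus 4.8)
The plan is to realise this statement as an instance of higher Kostant--Souriau prequantisation, integrating the abelian Lie algebra extension $0\to Z\to\hat\g\to\g\to 0$ with $Z=\Om^k(N)/\Om^k(N)_{exact}$ and cocycle $\om^{[2]}$. The group $G$ acts on $Z$ by pullback, and since $H^k(N,\ZZ)$ is discrete in $Z$, the quotient $\bar Z=Z/H^k(N,\ZZ)$ is again an abelian Lie group whose Lie algebra is $Z$. Thus integrating the extension over the simply connected cover amounts to producing a \emph{smooth} group $2$-cocycle $f\colon\widetilde{G_0}\times\widetilde{G_0}\to\bar Z$ whose second derivative at the identity recovers $\om^{[2]}$; the resulting extension is then $\widehat G=\widetilde{G_0}\times_f\bar Z$.

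First I would construct $f$ by transgressing $\om$ along the action. Writing $a\colon G\times N\to N$ for the action and using that $\widetilde{G_0}$ is simply connected, for a pair $g_1,g_2$ I choose a smooth singular $2$-simplex $\sigma\colon\De^2\to\widetilde{G_0}$ with vertices $e,g_1,g_1g_2$, form $\Psi\colon\De^2\times N\to N$, $\Psi(s,x)=a(\sigma(s),x)$, and fibre-integrate over the two-dimensional simplex:
\[
f(g_1,g_2)=\Big[\int_{\De^2}\Psi^*\om\Big]\in\big(\Om^k(N)/\Om^k(N)_{exact}\big)/H^k(N,\ZZ)=\bar Z.
\]
Fibre integration over $\De^2$ lowers the form degree from $k+2$ to $k$, so the result is indeed an element of $Z$, and reducing modulo the lattice gives a value in $\bar Z$.

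The cocycle identity is then a Stokes computation. Given $g_1,g_2,g_3$ I fill in the corresponding singular $3$-simplex in $\widetilde{G_0}$; since $d\om=0$, the fibre integral of $\Psi^*\om$ over $\partial\De^3$ vanishes, and the four boundary faces reproduce exactly the four terms of $g_1\cdot f(g_2,g_3)-f(g_1g_2,g_3)+f(g_1,g_2g_3)-f(g_1,g_2)$, the $g_1$-translation of the first face being absorbed by the $G$-module structure on $\bar Z$. Differentiating $f$ twice at the identity and using the fibre-integration formula extracts the bidegree $(2,k)$ component of $a^*\om$ at $e$, which is precisely $i_{y_N}i_{x_N}\om$ modulo exact forms; hence $f$ integrates the correct Lie algebra cocycle $\om^{[2]}$.

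The main obstacle is the \emph{global} smoothness and well-definedness of $f$ into $\bar Z$, and this is exactly where integrality of $[\om]$ is used. Locally in $(g_1,g_2)$ one can choose the fillings $\sigma$ smoothly, obtaining local $Z$-valued representatives; two such choices differ by replacing the chain swept out in $N$ by one differing by a closed $(k+2)$-cycle, so by Stokes the two representatives differ by a closed $k$-form whose periods are the periods of $\om$ over integral $(k+2)$-cycles of $N$. Integrality of $[\om]$ forces these into $\ZZ$, so the local representatives differ by elements of $H^k(N,\ZZ)$ and glue to a single globally smooth cocycle with values in $\bar Z$ (but not in $Z$), which is exactly why the lattice quotient cannot be avoided. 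Together with discreteness of $H^k(N,\ZZ)$ in $Z$, which makes $\bar Z$ a Lie group, this yields the asserted abelian Lie group extension of $\widetilde{G_0}$. Equivalently, one may run Neeb's period/flux criterion: the flux obstruction disappears on the simply connected cover, and the relevant period homomorphism is identified, via the same transgression, with the period map of $\om$ over $(k+2)$-cycles, whose image is the discrete group $H^k(N,\ZZ)$ precisely when $[\om]$ is integral.
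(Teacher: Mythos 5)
Your construction is sound in outline, but it is a genuinely different route from the paper's: the paper offers no direct argument at all for Theorem~\ref{kh} --- it quotes the result from Neeb and says only that ``the proof uses a general theorem on the two obstructions for the integrability of abelian Lie algebra extensions'' (the period and flux homomorphisms of \cite{Ne04}). What you do is unwind that criterion into an explicit transgression cocycle: choosing simplices in $\widetilde{G_0}$ with edges related by left translation, fibre-integrating $a^*\om$ over them, and showing that the ambiguity of the filling lands in the period lattice $H^k(N,\ZZ)$ because a $2$-cycle in $\widetilde{G_0}$ paired with a $k$-cycle in $N$ pushes forward to an integral $(k+2)$-cycle in $N$. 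This is exactly the computation that identifies Neeb's period homomorphism $\pi_2(G)\to\Om^k(N)/\Om^k(N)_{exact}$ with the period map of $\om$, and the vanishing of the flux obstruction on the simply connected cover is what you implicitly use when you say unique-up-to-homotopy fillings exist; your closing remark makes this equivalence explicit. What the abstract approach buys is precisely the two points you gloss over: (i) smoothness of $f$ on the Fr\'echet--Lie group $C^\oo(S,\Diff(M))$, where ``choose the fillings smoothly in $(g_1,g_2)$'' is delicate and is handled in \cite{Ne04} by working with a left-invariant $Z$-valued $2$-form and local charts rather than ad hoc simplices; and (ii) the discreteness of $H^k(N,\ZZ)$ in $\Om^k(N)/\Om^k(N)_{exact}$, which you assert but which requires $N$ to be of finite type (it is part of the hypotheses buried in Neeb's formulation, and is needed for $\bar Z$ to be a Lie group at all). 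What your approach buys in return is an explicit formula for the group cocycle, in the spirit of the $c(g_1,g_2)=a(g_1)(dg_2g_2^{-1})$ formula the paper gives for the Neeb cocycle in Section~2. Neither of these caveats is a wrong step, but a complete write-up should either address them or, as the paper does, delegate them to \cite{Ne04}.
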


The proof uses a general theorem on the two obstructions for the integrability of abelian Lie algebra extensions
to extensions of infinite dimensional Lie groups \cite{Ne04}.


Theorem \ref{kh} can be applied to our cocycle $\si_H$ from \eqref{sig}.
The Lie group $G=C^\oo(S,\Diff(M))$ acts in a natural way on $N=S\x M$, namely $g\cdot(s,m)=(s,g(s)(m))$.
If $\om$ denotes pull-back of the closed form $H\in\Om^{k+2}(M)$ to $S\x M$, 
then the cocycle $\si_H$ is just the restriction of $\om^{[2]}$ 
to the current algebra $\g=C^\oo(S,\X(M))$, a Lie subalgebra of $\X(S\x M)$.

\begin{corollary}
Assuming  that the cohomology class of the closed $(k+2)$-form $H$ on $M$ is integral,
the Lie algebra of sigma model symmetries
(an abelian Lie algebra extension
of $C^\oo(S,\X(M))$ by the module $\Om^k(S\x M)/\Om^k(S\x M)_{exact}$
with cocycle $\si_H$)
integrates to an abelian Lie group extension of the universal covering of the identity component 
$C^\oo(S,\Diff(M))_0$ by the group $(\Om^k(S\x M)/\Om^k(S\x M)_{exact})/H^k(S\x M,\ZZ)$.
\end{corollary}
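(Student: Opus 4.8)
The plan is to deduce this corollary directly from Theorem \ref{kh}, applied to the current group $G=C^\oo(S,\Diff(M))$ acting on $N=S\x M$ by $g\cdot(s,m)=(s,g(s)(m))$, exactly as set up in the discussion preceding the statement. First I would record that $G$ is a Lie group (here $S$ is compact) whose Lie algebra is the current algebra $C^\oo(S,\X(M))$, and that differentiating the action embeds this Lie algebra into $\X(S\x M)$ through the infinitesimal generators. Taking $\om=\pi^*H\in\Om^{k+2}(S\x M)$ to be the pull-back of $H$ along the projection $\pi:S\x M\to M$, the cocycle $\om^{[2]}$ of Theorem \ref{kh}, evaluated on these generators, is precisely $\si_H$ with values in $\Om^k(S\x M)/\Om^k(S\x M)_{exact}$. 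Hence the abelian extension carried by the sigma model symmetries is nothing but the Neeb extension attached to $\om^{[2]}$.

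Next I would check the single hypothesis of Theorem \ref{kh}, namely the integrality of $[\om]\in H^{k+2}_{dR}(S\x M)$. Since $\om=\pi^*H$ and pull-back in de Rham cohomology preserves integrality, $\int_{Z}\pi^*H=\int_{\pi_*Z}H\in\ZZ$ for every integral cycle $Z$, the assumed integrality of $[H]$ yields at once the integrality of $[\om]$. With this verified, Theorem \ref{kh} applies verbatim and produces an abelian Lie group extension of the universal covering of $G_0=C^\oo(S,\Diff(M))_0$ by the quotient group $(\Om^k(S\x M)/\Om^k(S\x M)_{exact})/H^k(S\x M,\ZZ)$, whose differentiated cocycle is $\om^{[2]}=\si_H$.

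Finally I would match this output with the assertion of the corollary. The base group is the universal cover of $C^\oo(S,\Diff(M))_0$; the abelian kernel is exactly $(\Om^k(S\x M)/\Om^k(S\x M)_{exact})/H^k(S\x M,\ZZ)$, the quotient of the Lie algebra module $\Om^k(S\x M)/\Om^k(S\x M)_{exact}$ by its period lattice; and the Lie-theoretic derivative recovers the extension by the cocycle $\si_H$ of \eqref{sig} identified above. I expect that essentially no computation remains, the content of the corollary being a direct specialization of the cited theorem. The only genuine point requiring care is the functional-analytic one underlying the applicability of Theorem \ref{kh} and of Neeb's integration criterion \cite{Ne04}: that $C^\oo(S,\Diff(M))$ is an honest (Fr\'echet-)Lie group with Lie algebra $C^\oo(S,\X(M))$ acting smoothly on the (possibly non-compact) manifold $S\x M$. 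Granting this, which is where I would expect the main---if modest---obstacle to lie, the corollary follows.
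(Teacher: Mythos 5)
Your proposal is correct and follows exactly the route the paper takes: the corollary is obtained by specializing Theorem \ref{kh} to $G=C^\oo(S,\Diff(M))$ acting on $N=S\x M$ with $\om$ the pull-back of $H$, the integrality of $[\om]$ being inherited from that of $[H]$. The paper gives no further argument beyond this setup, so nothing is missing from your account.
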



\section{Appendix}


\subsection{Hat calculus}\label{hatcal}

The space $C^\oo(S,M)$ of smooth functions 
on the compact $k$--dimensional manifold $S$ with values in the manifold $M$
is a Fr\'echet manifold in a natural way. The tangent space at $f\in C^\oo(S,M)$
is identified with the space of smooth sections of the pullback bundle $f^*TM$
(vector fields on $M$ along $f$), \ie $v_f:s\in S\mapsto v_f(s)\in T_{f(s)}M$.

Differential forms on $C^\oo(S,M)$ can be obtained in a natural way 
from pairs of differential forms on $M$ and $S$ by the hat pairing \cite{Vi09}.
Let $\ev:S\x C^\oo(S,M)\to M$ be the evaluation map 
$\ev(s,f)=f(s)$, and $\pr:S\x C^\oo(S,M)\to S$ the projection 
on the first factor.
A pair of differential forms $\al\in\Om^p(M)$ and $\et\in\Om^q(S)$,
with $p+q\geq k$, determines a differential form 
$\widehat{\al\cdot\et}$ on $C^\oo(S,M)$:
\begin{equation}\label{use}
{\widehat{\al\cdot\et}=\fint_S\ev^*\al\wedge\pr^*\et}.
\end{equation}
It is the fiber integral over $S$ of the $(p+q)$-form 
$(\ev^*\al\wedge\pr^*\et)$ on $S\x C^\oo(S,M)$.
In this way one obtains a bilinear map called the hat pairing: 
\begin{equation*}\label{pair}
\Om^p(M)\x\Om^q(S)\to\Om^{p+q-k}(C^\oo(S,M)),\quad (\al,\eta)\mapsto \widehat{\al\cdot\et}.
\end{equation*}
A special case is the hat pairing of differential forms on $M$ with the constant function $1\in\Om^0(S)$ 
(the transgression map)
\begin{equation}\label{tran}
\al\in\Om^p(M)\mapsto\hat\al=\fint_S\ev^*\al\in\Om^{p-k}(C^\oo(S,M)).
\end{equation}
If $p+q=k$, then the result of the hat pairing is a function on $C^\oo(S,M)$:
\begin{equation}\label{func}
\widehat{\al\cdot\et}(f)=\int_S f^*\al\wedge\et, \text{ for all }f\in C^\oo(S,M).
\end{equation}

The diffeomorphism group $\Diff(M)$ acts naturally from the left on $C^\oo(S,M)$. 
The infinitesimal generator of $v\in\X(M)$ (the Lie algebra of the diffeomorphism group
is the Lie algebra of vector fields with the opposite bracket) 
is the vector field $\bar v$ on $C^\oo(S,M)$ given by
\[
\bar v(f)=v\o f,\quad\text{for all}\quad  f\in C^\oo(S,M).
\] 
The bracket of two infinitesimal generators is 
$\left[\bar v,\bar w\right]=\overline{[v,w]}$. 
Given $\al\in\Om^p(M)$ and $\et\in\Om^q(S)$, the identity
\begin{equation}\label{inse}
{i}_{\bar v}\widehat{\al\cdot\et}=\widehat{(({i}_v\al)\cdot\et)} 
\end{equation}
holds for all vector fields $v\in\X(M)$.
Together with \eqref{func}, the last formula permits us to write down an explicit expression for the hat pairing,
\[
\widehat{\al\cdot\et}(\bar v_1,\dots,\bar v_{p+q-k})(f)=\int_S f^*(i_{v_{p+q-k}}\dots i_{v_1}\al)\wedge\et,
\]
which gives a complete description of $\widehat{\al\cdot\et}$ on the open subset of embeddings of $S$ in $M$,
since $\Diff(M)$ acts infinitesimally transitive on embeddings.

From the known fact that differentiation and fiber integration along a boundary free manifold $S$ commute,
we get that the exterior derivative $d$ 
is a derivation for the hat pairing, \ie
\begin{equation}\label{deri}
d(\widehat{\al\cdot\et})= \widehat{(d\al)\cdot\et}+(-1)^{p}\widehat{\al\cdot d\et},
\end{equation}
where $\al\in\Om^p(M)$ and $\et\in\Om^q(S)$.
In the particular case $p+q+1=k$ we get that
$\widehat{(d\al)\cdot\et}+(-1)^{p}\widehat{\al\cdot d\et}=0$.
The identity
\begin{equation*}
L_{\bar v}\widehat{\al\cdot\et}=\widehat{((L_v\al)\cdot\et)}\text{ for all }v\in\X(M) 
\end{equation*}
is a consequence of \eqref{inse} and \eqref{deri}.

\begin{remark}\label{gene}
{\rm 
We need also more general vector fields of type $\ph\bar v$ on $C^\oo(S,M)$,
for $v\in\X(M)$ and $\ph\in C^\oo(S)$. This means that $(\ph\bar v)(f):s\in S\mapsto\ph(s)v(f(s))\in T_{f(s)}M$
for all $f\in C^\oo(S,M)$. 
Contraction and Lie derivative for such vector fields can be written as:
\begin{align}\label{tria}
{i}_{\ph\bar v}\widehat{\al\cdot\et}&=\widehat{({i}_v\al)\cdot\ph\et}\nonumber \\
L_{\ph\bar v}\widehat{\al\cdot\et}&=\widehat{(L_v\al)\cdot\ph\et}-(-1)^{|\al|}\widehat{(i_v\al)\cdot(d\ph\wedge\et)}.
\end{align}
The last identity follows from the following calculation:
\begin{multline*}
L_{\ph\bar v}\widehat{\al\cdot\et}
=di_{\ph\bar v}\widehat{\al\cdot\et}+i_{\ph\bar v}d\widehat{\al\cdot\et}
=d(\widehat{i_v\al\cdot\ph\et})+i_{\ph\bar v}(\widehat{d\al\cdot\et}+(-1)^{|\al|}\widehat{\al\cdot d\et})\\
=\widehat{(di_v\al\cdot\ph\et)}+(-1)^{|\al|-1}\widehat{(i_v\al\cdot d(\ph\et))}
+\widehat{(i_vd\al\cdot\ph\et)}+(-1)^{|\al|}\widehat{(i_v\al\cdot\ph d\et)}\\
=\widehat{(L_v\al\cdot\ph\et)}-(-1)^{|\al|}\widehat{(i_v\al\cdot d\ph\wedge\et)}.
\end{multline*}
}
\end{remark}


\subsection{The twisted cotangent bundle}\label{cotg}

We start with some facts about the canonical symplectic form on the cotangent bundle
of a (possibly infinite dimensional) smooth manifold \cite{MR99}.

Let $M$ be a smooth manifold, $\om$ the canonical symplectic form on the cotangent bundle $T^*M$,
and $p:T^*M\to M$ the natural projection.
This means that $\om=-d\th$, where $\th$ is the Liouville form, so $\th(Y_\al)=\langle\al,Tp.Y_\al\rangle$,
for all $Y_\al\in T_\al(T^* M)$.

Given $v\in\X(M)$, we denote by $X_v\in\X(T^*M)$ the Hamiltonian vector field 
with Hamiltonian function $\ev_v$ on $T^*M$ defined through evaluation at the vector field $v$. 

\begin{lemma}\label{doua}
The vector fields $v$ on $M$ and $X_v$ on $T^*M$ are $p$-related, so $Tp.X_v=v\o p$.
The map $\Ph:v\in\X(M)\mapsto \ev_v\in C^\oo(T^*M)$ is a Lie algebra homomorphism
for the Poisson bracket induced by the canonical symplectic form on $T^*M$. 
\end{lemma}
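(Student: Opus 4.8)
The plan is to recognize the Hamiltonian vector field $X_v$ as the cotangent lift of $v$; once this identification is made, both assertions of the lemma become transparent. So the first thing I would do is introduce $v^{\uparrow}\in\X(T^*M)$, the infinitesimal generator of the flow on $T^*M$ obtained by lifting the flow of $v$ through cotangent maps. Two features of this lift are standard, and in the infinite-dimensional setting are exactly the content of \cite{MR99}: it is $p$-related to $v$, i.e. $Tp\cdot v^{\uparrow}=v\o p$, and the lifted flow preserves the Liouville form, so $L_{v^{\uparrow}}\th=0$.

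For the first assertion I would then run Cartan's formula against $\om=-d\th$. From $0=L_{v^{\uparrow}}\th=-i_{v^{\uparrow}}\om+d(i_{v^{\uparrow}}\th)$ one reads off $i_{v^{\uparrow}}\om=d(i_{v^{\uparrow}}\th)$. Evaluating the Liouville form on $v^{\uparrow}$ with the defining formula $\th_\al(Y_\al)=\langle\al,Tp\cdot Y_\al\rangle$ and using the $p$-relatedness gives $(i_{v^{\uparrow}}\th)(\al)=\langle\al,v(p(\al))\rangle=\ev_v(\al)$, so that $i_{v^{\uparrow}}\om=d\ev_v$. Hence $v^{\uparrow}$ is a Hamiltonian vector field for $\ev_v$, and by weak nondegeneracy of $\om$ it coincides with $X_v$. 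The identity $Tp\cdot X_v=v\o p$ is then just the $p$-relatedness of the lift.

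For the second assertion I would compute the Poisson bracket directly from this identification. Writing $\ev_w=i_{w^{\uparrow}}\th$ and using $L_{v^{\uparrow}}\th=0$, the derivation property of the Lie derivative yields $\{\ev_v,\ev_w\}=X_v(\ev_w)=L_{v^{\uparrow}}(i_{w^{\uparrow}}\th)=i_{[v^{\uparrow},w^{\uparrow}]}\th$. Since $v\mapsto v^{\uparrow}$ is a Lie algebra homomorphism, $[v^{\uparrow},w^{\uparrow}]=[v,w]^{\uparrow}$, and the computation of the preceding paragraph applies once more to give $i_{[v,w]^{\uparrow}}\th=\ev_{[v,w]}$. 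Thus $\{\ev_v,\ev_w\}=\ev_{[v,w]}$, which is precisely the homomorphism property of $\Ph$ (some care is needed to match the sign conventions for the Poisson bracket and for the bracket on $\X(M)$, but no new idea enters).

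The routine parts are genuinely routine; the one place that needs attention is that $T^*M$ is infinite-dimensional and $\om$ is only weakly nondegenerate, so that a priori a Hamiltonian vector field for a given function need not exist. The point I would stress is that the cotangent lift sidesteps this entirely: it provides an explicit vector field solving $i_{v^{\uparrow}}\om=d\ev_v$, which guarantees existence of $X_v$, while weak nondegeneracy still suffices for uniqueness. So the main obstacle is not a computation but the justification that the standard finite-dimensional facts about the cotangent lift and the Liouville form carry over to the Fr\'echet setting, for which I would appeal to \cite{MR99}.
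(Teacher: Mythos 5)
Your argument is correct, and it is the standard one. Note that the paper itself offers no proof of this lemma: it is quoted as a known fact from the reference \cite{MR99} on the canonical symplectic structure of cotangent bundles, so there is nothing in the text to compare your proof against line by line. Your route --- identifying $X_v$ with the cotangent lift $v^{\uparrow}$, deducing $i_{v^{\uparrow}}\om=d(i_{v^{\uparrow}}\th)=d\ev_v$ from $L_{v^{\uparrow}}\th=0$ and Cartan's formula, and then obtaining $\{\ev_v,\ev_w\}=i_{[v^{\uparrow},w^{\uparrow}]}\th=\ev_{[v,w]}$ --- is exactly the argument the citation is standing in for, and your observation that the explicit lift is what guarantees \emph{existence} of the Hamiltonian vector field in the weakly symplectic Fr\'echet setting is the one genuinely nontrivial point; it is well taken. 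The only loose end is the sign bookkeeping you flag yourself: the paper elsewhere uses the convention that the Lie algebra of $\Diff(M)$ carries the opposite bracket, and the final identity must come out as $\{\ev_v,\ev_w\}=\ev_{[v,w]}$ (compare Proposition \ref{cinci} with $B=0$), so you should fix one convention for $X_f$ and for the lift and check that the signs cancel rather than leaving it as a remark.
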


Given $\al\in\Om^1(M)$, we denote by $\vl(\al)\in\X(T^*M)$ the vertical lift defined by the map $\al:M\to T^*M$:
$$
\vl(\al):\be_x\mapsto \frac{d}{dt}\Big|_{t=0}(\be_x+t\al(x))\text{ for }\be_x\in T^*_xM.
$$ 
We notice that the Liouville form vanishes on vertical lifts: $\th(\vl(\al))=0$,
and the flow of a vertical lift is
\begin{equation}\label{flow}
\Fl_t^{\vl(\al)}(\be)=\be+t\al.
\end{equation}

\begin{lemma}\label{una}
For any 1-form $\al$ on $M$, $i_{\vl(\al)}\om=-p^*\al$, where $\om$ is the canonical symplectic form.
\end{lemma}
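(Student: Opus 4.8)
The plan is to exploit the relation $\om=-d\th$ together with Cartan's magic formula, so that the whole computation reduces to a single Lie derivative. First I would use the fact, recorded just above the statement, that the Liouville form vanishes on vertical lifts, $i_{\vl(\al)}\th=\th(\vl(\al))=0$. Applying $L_{\vl(\al)}=i_{\vl(\al)}d+d\,i_{\vl(\al)}$ to $\th$ then gives
$$
i_{\vl(\al)}\om=-i_{\vl(\al)}d\th=-\big(L_{\vl(\al)}\th-d\,i_{\vl(\al)}\th\big)=-L_{\vl(\al)}\th,
$$
since the second term vanishes. Thus it suffices to show $L_{\vl(\al)}\th=p^*\al$.

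Next I would evaluate this Lie derivative through the flow, which is given explicitly in \eqref{flow} by $\Fl_t^{\vl(\al)}(\be)=\be+t\al$. Writing $\ph_t=\Fl_t^{\vl(\al)}$, the key geometric observation is that adding $t\al(x)$ to a covector $\be_x$ leaves its base point fixed, so that $p\o\ph_t=p$ and hence $Tp\o T\ph_t=Tp$. Using the defining property $\th_\gamma(Z)=\langle\gamma,Tp.Z\rangle$, for any $Y_\be\in T_\be(T^*M)$ this yields
$$
(\ph_t^*\th)(Y_\be)=\big\langle\be+t\al,\ Tp.Y_\be\big\rangle=\th_\be(Y_\be)+t\,\langle\al,Tp.Y_\be\rangle=\th_\be(Y_\be)+t\,(p^*\al)_\be(Y_\be),
$$
so that $\ph_t^*\th=\th+t\,p^*\al$. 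Differentiating at $t=0$ gives $L_{\vl(\al)}\th=p^*\al$, and combining with the previous display produces $i_{\vl(\al)}\om=-p^*\al$.

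The only point requiring care is the identity $p\o\ph_t=p$ and the ensuing reduction $Tp.(T\ph_t.Y_\be)=Tp.Y_\be$, which is exactly what lets the pullback of $\th$ split into its original value plus a term linear in $t$. Since $M$ may be infinite dimensional, I would phrase this step at the level of the defining formula for $\th$ rather than in local coordinates, where the statement is immediate but the coordinate apparatus is unavailable; everything else is a routine application of Cartan's formula and the explicit flow.
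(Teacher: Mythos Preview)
Your proof is correct and is a genuinely cleaner variant of what the paper does. The paper evaluates $\om(\vl(\al),Y)=-d\th(\vl(\al),Y)$ directly via the invariant formula for the exterior derivative of a $1$-form, obtaining three terms; it then uses $\th(\vl(\al))=0$ and the flow \eqref{flow} to compute the remaining two, and must verify a cancellation between the term $-\langle\be,\frac{d}{dt}\big|_{t=0}T_{\be+t\al}p.Y\rangle$ and the bracket term $\th([\vl(\al),Y])(\be)$. Your route packages this cancellation for free: by passing through Cartan's formula you reduce everything to $L_{\vl(\al)}\th$, and the single observation $p\circ\ph_t=p$ makes $\ph_t^*\th=\th+t\,p^*\al$ immediate, so no bracket term ever appears. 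Both arguments rest on exactly the same ingredients (vanishing of $\th$ on vertical lifts and the explicit flow), but yours organizes them more efficiently; the paper's version has the minor advantage of being phrased as a pointwise identity in an arbitrary tangent vector $Y$, which some readers may find more concrete.
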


\begin{proof}
Let $Y$ be an arbitrary vector field on $T^*M$. 
We show that $\om(\vl(\al),Y)=(p^*\al)(Y)$ at an arbitrary point $\be\in T^*M$:
\begin{align*}
\om(\vl(\al),Y)(\be)&=i_Yd(\th(\vl(\al)))(\be)-i_{\vl(\al)}d(\th(Y))(\be)+\th([\vl(\al),Y])(\be)\\
&=-\frac{d}{dt}\Big|_{t=0}(\th(Y))(\be+t\al)+\th([\vl(\al),Y])(\be)\\
&=-\frac{d}{dt}\Big|_{t=0}\langle\be+t\al,T_{\be+t\al}p.Y\rangle+\langle\be,T_\be p.[\vl(\al),Y]\rangle\\
&=-\langle\al,T_\be p.Y\rangle-\langle\be,\frac{d}{dt}\Big|_{t=0}(T_{\be+t\al}p.Y)\rangle+\langle\be,T_\be p.[\vl(\al),Y]\rangle
\\
&=-\langle\al,T_\be p.Y\rangle.
\end{align*}
The last step uses the identity $\frac{d}{dt}\Big|_{t=0}T_{\be+t\al}p.Y(\be+t\al)=T_\be p.[\vl(\al),Y](\be)$,
which follows from the expression \eqref{flow} of the flow for a vertical lift.
\end{proof}


\begin{remark}\label{patru}
{\rm The Lie algebra homomorphism $\Ph$ from Lemma \ref{doua} can be extended to
an injective Lie algebra homomorphism defined on the semidirect product Lie algebra $\X(M)\ltimes C^\oo(M)$:
\[
\Ph:v+f\in\X(M)\ltimes C^\oo(M)\mapsto \ev_v+p^*f\in C^\oo(T^*M).
\]
}
\end{remark}

\paragraph{Twisted symplectic form}
The canonical symplectic form $\om$ on the cotangent bundle can be twisted by adding a magnetic term, 
namely the pull-back of a closed 2-form $B$ on the base manifold \cite{MR99}. 
One gets a new symplectic form $\om_B=\om+p^*B$ on $T^*M$.
Such twisted symplectic forms also appear in cotangent bundle reduction. 
Let $\{\ ,\ \}_B$ denote the Poisson bracket defined with the twisted symplectic form. 

On the other hand every closed 2-form $B$ defines a Lie algebra 2-cocycle $B$ on $\X(M)$ with values in the module $C^\oo(M)$.
The Lie bracket on the abelian Lie algebra extension $\X(M)\ltimes_B C^\oo(M)$ with characteristic cocycle $B$
is
\[
[v+f,w+g]=[v,w]+L_vg-L_wf+B(v,w).
\]

\begin{proposition}\label{cinci}
The map 
\[
\Ph:\X(M)\ltimes_B C^\oo(M)\to C^\oo(T^*M), \quad \Ph(v)=\ev_v,\quad\Ph(f)=p^*f
\]
is an injective Lie algebra homomorphism for the twsted Poisson bracket $\{\ ,\ \}_B$.
\end{proposition}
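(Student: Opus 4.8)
The plan is to verify that $\Ph$ respects the three types of Poisson brackets among the generators $\ev_v$ and $p^*f$, matching $\{\ev_v,\ev_w\}_B$ against $\Ph([v,w]+B(v,w))=\ev_{[v,w]}+p^*(B(v,w))$, the mixed bracket $\{\ev_v,p^*g\}_B$ against $\Ph(L_vg)=p^*(L_vg)$, and $\{p^*f,p^*g\}_B$ against $0$. The essential step is to compute the Hamiltonian vector fields with respect to the twisted form $\om_B=\om+p^*B$, after which each bracket follows from $\{F,G\}_B=X_F^B(G)=\langle dG,X_F^B\rangle$ together with the $p$-relatedness of Lemma \ref{doua} and the contraction identity of Lemma \ref{una}.

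First I would locate the twisted Hamiltonian field $X_v^B$ of $\ev_v$, defined by $i_{X_v^B}\om_B=d\ev_v=i_{X_v}\om$, where $X_v$ is the canonical Hamiltonian field from Lemma \ref{doua}. Writing $X_v^B=X_v+Z$ with $Z$ vertical, the defining equation reduces to $i_Z\om=-i_{X_v}p^*B$. Since $X_v$ is $p$-related to $v$, one has $i_{X_v}p^*B=p^*(i_vB)$, so Lemma \ref{una} applied to the $1$-form $i_vB$ identifies the correction as the vertical lift $Z=\vl(i_vB)$; hence $X_v^B=X_v+\vl(i_vB)$. The same argument shows that the twisted Hamiltonian field of $p^*f$ is the vertical lift $-\vl(df)$, exactly as in the untwisted case, because the extra term $i_{(\cdot)}p^*B$ vanishes on vertical fields.

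With these fields in hand the three brackets are immediate. For $\{\ev_v,\ev_w\}_B=\langle d\ev_w,X_v^B\rangle=\om(X_w,X_v+\vl(i_vB))$, the first summand reproduces $\ev_{[v,w]}$ as in Lemma \ref{doua}, while the second, using $i_{\vl(i_vB)}\om=-p^*(i_vB)$ from Lemma \ref{una}, equals $p^*(B(v,w))$; together they give $\Ph([v,w]+B(v,w))$, so the cocycle term $B(v,w)$ is produced precisely by the magnetic correction. The mixed bracket $\{\ev_v,p^*g\}_B=\langle p^*dg,X_v^B\rangle$ loses its vertical-lift contribution, since a basic form annihilates vertical vectors, and returns $p^*(L_vg)$. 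Finally $\{p^*f,p^*g\}_B=\langle p^*dg,-\vl(df)\rangle=0$ because $\vl(df)$ projects to zero under $Tp$. This matches the bracket $[v+f,w+g]=[v,w]+L_vg-L_wf+B(v,w)$ term by term.

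Injectivity is the easy part: $\ev_v$ is fiberwise linear while $p^*f$ is fiberwise constant, so $\ev_v+p^*f\equiv 0$ forces $f=0$ on restricting to the zero section, and then $v=0$ by nondegeneracy of the evaluation pairing. The main obstacle is the Hamiltonian vector field computation for $\om_B$: one must check that the magnetic correction to $X_v$ is exactly $\vl(i_vB)$ and that $X_{p^*f}^B$ remains vertical, both of which rest on Lemma \ref{una} and on controlling which contractions with $p^*B$ vanish.
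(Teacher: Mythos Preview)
Your proof is correct and follows essentially the same strategy as the paper: identify the twisted Hamiltonian vector field of $\ev_v$ as $X_v+\vl(i_vB)$ using Lemma~\ref{doua} and Lemma~\ref{una}, observe that the Hamiltonian field of $p^*f$ is a vertical lift, and then verify the three bracket identities. The only cosmetic difference is that you evaluate $\{F,G\}_B$ as $\langle dG,X_F^B\rangle$ using just one twisted Hamiltonian field, whereas the paper pairs both twisted Hamiltonian fields via $\om_B$; the computations are equivalent. You also supply the injectivity argument explicitly, which the paper leaves to the reader, and your sign $X_{p^*f}^B=-\vl(df)$ is the one consistent with Lemma~\ref{una}.
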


\begin{proof}
Given $v\in\X(M)$ and $f\in C^\oo(M)$, the Hamiltonian vector field on $T^*M$ with Hamiltonian function $\ev_v$ is $X_v+\vl(i_vB)$, 
and the Hamiltonian vector field on $T^*M$ with Hamiltonian function $p^*f$ is  $\vl(df)$, because of
the following computations, that use lemma \ref{doua} and lemma \ref{una}:
\begin{gather*}
i_{X_v+\vl(i_vB)}(\om_B)=i_{X_v}\om+i_{X_v}p^*B+i_{\vl(i_vB)}\om=d\ev_v+p^*i_vB-p^*i_vB=d\ev_v\\
i_{\vl(df)}(\om_B)=i_{\vl(df)}\om=p^*df=d(p^*f).
\end{gather*}

The map  $\Phi$ is a Lie algebra morphism because of the identities 
\begin{equation*}
\{\ev_v,\ev_w\}_B=\ev_{[v,w]}+p^*B(v,w),\quad
\{\ev_v,p^*f\}_B=p^*L_vf, \quad \{p^*f,p^*g\}_B=0.
\end{equation*}
The first identity follows from the computation 
\begin{align*}
\{\ev_v,\ev_w\}_B
&=\om_B(X_w+\vl(i_wB),X_v+\vl(i_vB))\\
&=\om(X_w,X_v)+\om(\vl(i_wB),X_v)+\om(X_w,\vl(i_vB))+(p^*B)(X_w,X_v)\\
&=\{\ev_v,\ev_w\}-i_{X_v}p^*i_wB+i_{X_w}p^*i_vB+p^*(B(w,v))\\
&=\ev_{[v,w]}+p^*B(v,w),
\end{align*}
and the other two identities follow in a similar fashion.
\end{proof}



{\footnotesize

\bibliographystyle{new}

\begin{thebibliography}{300}

\bibitem{AlSe10}
A. Alekseev, P. Severa, Equivariant cohomology and current algebras (preprint).

\bibitem{AlSt05}
A. Alekseev and T. Strobl,
{Current algebras and differential geometry}, 
\textit{JHEP} {\bf 03} (2005), 035.

\bibitem{zabzine} 
G. Bonelli and M. Zabzine,
From current algebras for p-branes to topological M-theory,
{\it JHEP} {\bf 09} (2005) 015.

\bibitem{KS96}
Y. Kosmann-Schwarzbach, From Poisson algebras to Gerstenhaber algebras, 
\textit{Ann. Inst. Fourier} {\bf 46} (1996), 1241--1272.

\bibitem{MR99}
J.E. Marsden and T. Ratiu,
{\it Introduction to Mechanics and Symmetry}, 2nd edition, 
Springer, 1999.

\bibitem{Ne04}
K.--H. Neeb,
{Abelian extensions of infinite-dimensional Lie groups},
\textit{Travaux Math.} {\bf XV} (2004), 69--194.

\bibitem{Ne07}
K.-H. Neeb, Lie groups of bundle automorphisms and their extensions, 
in {\it Developments and Trends in Infinite-Dimensional Lie Theory}, 
Eds. K.-H. Neeb and A. Pianzola,
Progress in Mathematics, Vol. 288, Part 2, 2011, pp. 281--338.

\bibitem{nekrasov} 
N. Nekrasov, Lectures on curved beta-gamma systems, pure spinors, and anomalies,
(preprint).  

\bibitem{Vi09}
C. Vizman, Natural differential forms on manifolds of functions, 
\textit{Archivum Math.} {\bf 47}  (2011), 201--215.

\end{thebibliography}
\addcontentsline{toc}{section}{References}

}

\end{document}